\documentclass{article}
\usepackage{graphicx} 
\usepackage[utf8]{inputenc}
\usepackage{geometry}
\usepackage{amsmath,amsthm,verbatim,amssymb,amscd, graphicx}
\usepackage{natbib}

\usepackage{bbm}
\usepackage{verbatim}
\usepackage{xcolor}
\usepackage{hyperref}
\usepackage{amssymb,bm}
\usepackage{mathrsfs} 
\usepackage{authblk}

\usepackage{graphicx}
\usepackage{subcaption}
\newtheorem{theorem}{Theorem}[section]

\newtheorem{remark}{Remark}[section]
\newtheorem{lemma}{Lemma}[section]

\renewcommand{\leq}{\leqslant}
\renewcommand{\geq}{\geqslant}
\renewcommand{\le}{\leqslant}
\renewcommand{\ge}{\geqslant}
\renewcommand{\hat}{\widehat}

\newcommand{\argmin}{\mathop{\mathrm{arg}\,\mathrm{min}}}

\usepackage{xspace}

\newcommand{\iid}{i.i.d.\@\xspace}

\theoremstyle{definition}
\newtheorem{definition}{Definition}

\title{Gradient-free stochastic optimization of derivatives under strong convexity}

\author[1]{Arya Akhavan}
\author[2]{Sirine Louati}
\author[2]{Alexandre B. Tsybakov}
\affil[1]{University of Oxford}
\affil[2]{CREST, ENSAE, IP Paris}

\date{}

\begin{document}

\maketitle
\begin{abstract}
We consider the problem of minimizing the $k$-th order partial 
derivative $f=\partial_j^k g$ of an unknown function $g$ along a 
fixed coordinate direction $j$, based on noisy  
queries of $g$. Assuming that $g$ has H\"older regularity
${\beta+k}$ for some $\beta\ge 2$, that $f$ is 
strongly convex on a compact convex set $\Theta\subset\mathbb{R}^d$ 
and that $g$ and $f$ satisfy mild boundedness and Lipschitz 
regularity conditions on $\Theta$, we propose a kernel-based 
estimator of $\nabla f$ and analyze the projected stochastic 
gradient algorithm driven by this estimator. We obtain a 
non-asymptotic upper bound on the optimization error of the order 
$d^{(2\beta+k-1)/(\beta+k)}\,N^{-(\beta-1)/(\beta+k)}$, where 
$N$ is the total number of queries. We also
establish a minimax lower bound of the order 
$N^{-(\beta-1)/(\beta+k)}$ showing that this rate is optimal 
in $N$ over all sequential algorithms. 
\end{abstract}

\section{Introduction}
\label{sec:intro}

Gradient-free (or zero-order) optimization is widely used in statistics, machine learning, and scientific computing, particularly in settings where the objective function can only be accessed through noisy queries \citep{conn2009introduction,spall2002multivariate}. Such situations arise, for instance, when running complex simulations, conducting physical 
experiments or querying expensive black-box models. In these contexts, gradient information is not directly available and optimization procedures typically rely on randomized perturbations and smoothing techniques to extract local information from function values. 
These problems are also closely related to bandit and sequential decision-making frameworks, where optimization must be performed under partial feedback \citep{flaxman2004online,hazan2014bandit,shamir2017optimal}. While minimizing a function from its own noisy queries is now well understood, 
the present paper addresses a harder problem: minimizing the $k$-th derivative of an unknown function observed only through noisy zero-order queries of the function.

The literature on zero-order and bandit optimization spans several research communities, including stochastic approximation, online learning and 
nonparametric statistics. A broad range of algorithmic and information-theoretic results has established convergence guarantees for optimizing unknown functions from noisy feedback using 
randomized gradient surrogates, smoothing schemes or finite-difference constructions 
\citep{kiefer1952stochastic, polyak1990optimal, dippon2003accelerated, agarwal2010optimal,ghadimi2013stochastic,shamir2013complexity, duchi2015optimal,
bach2016highly,nesterov2017random,shamir2017optimal,locatelli2018adaptivity,akhavan2020exploiting,akhavan2021distributed,akhavan2024estimating,akhavan2024gradient}. The optimal behavior of cumulative regret and optimization error is now well understood under different feedback structures, noise models and regularity assumptions. By contrast, to the best of our knowledge, optimizing functions from indirect observations was not considered, despite its relevance in a variety of statistical 
and computational contexts.


Problems involving indirect observations have long been studied in nonparametric statistics, particularly in the context of density deconvolution and errors-in-variables models, starting from the works \citep{carroll1988optimal,devroye1989consistent,stefanski1990rates}. Optimal convergence rates for recovering probability densities in the problem of deconvolution have been established in various settings \citep{butucea2008sharp,Lepski2019OracleIA}. For a book-length account on density deconvolution see \citep{Meister2009DeconvolutionPI}. In the optimization perspective, the major problem related to density estimation is that of estimating the mode, that is, the maximizer of a probability density. While the optimal rates for this problem in direct observation setting are known since \cite{tsybakov1990recursive} and depend exponentially on the dimension, the case of indirect observations was treated only for one-dimensional deconvolution model \citep{wieczorek2010optimal}. In particular, the results in \citep{wieczorek2010optimal} establish optimal rates for estimating the maximizer of the $k$-th derivative of a probability density $p$ based on an i.i.d. sample from~$p$. The problem of stochastic optimization that we are considering is different since the query points are not i.i.d. and can be chosen sequentially by the learner. Thanks to this possibility of choice, as shown below, one can construct algorithms with rates of convergence that depend on the dimension~$d$ only through a weak factor not exceeding 
$d^2$. 

Minimizing the derivative $g^{(k)}$ using noisy queries of $g$ can also be related to the literature on linear inverse problems in nonparametric regression and Gaussian white noise model, where rate optimal estimators are obtained in various settings (see, e.g., \cite{bissantz2008statistical,cavalier2008nonparametric}). Estimating the $k$-th derivative is a basic special instance of linear inverse problem. That line of work proposes tools for estimation of $g^{(k)}$ as a whole object under the $L_q$ loss rather than estimation of its minimizer. Notably, that literature deals with model that do not allow for sequential choice of queries, so that the minimax optimal rates depend exponentially on the dimension.


In this paper we study the problem of minimizing the $k$-th order
partial derivative $f=\partial_j^k g$ of an unknown function
$g$ over a compact convex set $\Theta\subset\mathbb{R}^d$, given a
budget of $N$ noisy zero-order queries of $g$ at sequentially chosen
points. Here $j\in\{1,\dots,d\}$ is a fixed coordinate and
$\partial_j^k$ denotes the $k$-th order partial derivative along the
$j$-th coordinate so that $f$ corresponds to the multi-index
$s=(0,\dots,0,k,0,\dots,0)$ with a single nonzero entry equal to $k$
in position $j$.
Assuming that $g$ has H\"older regularity $\beta+k$ for some 
$\beta\ge 2$ and that $f$ is strongly convex on $\Theta$, we 
establish non-asymptotic minimax upper and lower bounds on the 
optimization error that match in $N$. The upper bound scales as 
$d^{(2\beta+k-1)/(\beta+k)}\,N^{-(\beta-1)/(\beta+k)}$, and the 
matching lower bound is of the order 
$N^{-(\beta-1)/(\beta+k)}$. The gap between the upper 
and lower bounds is represented by a mild dimension dependent factor. The question of improving this factor remains open.


\paragraph{Notation.}
Throughout the paper, $\langle\cdot,\cdot\rangle$ and
$\|\cdot\|$ denote the standard inner product and Euclidean norm
on $\mathbb{R}^d$. We denote by $\Theta\subset\mathbb{R}^d$ a
compact convex set with non-empty interior, by $\Pi_\Theta$ the
Euclidean projection onto $\Theta$, and by
$R:=\sup_{x,y\in\Theta}\|x-y\|$ its diameter. We write
\[
\Theta^+:=\{x\in\mathbb{R}^d:\mathrm{dist}(x,\Theta)\leq 1\}
\]
for the unit enlargement of $\Theta$, which is compact since $\Theta$
is compact. Here $\mathrm{dist}(x,\Theta)$ is the Euclidean distance from $x$ to $\Theta$. We fix once and for all an integer
\(k\in\mathbb N_0:=\{0,1,2,\ldots\}\) and a coordinate direction
\(j\in\{1,\ldots,d\}\). We denote by $e_j$ the $j$-th canonical basis vector in $\mathbb{R}^d$. We use the convention
\(
\partial_j^0 g := g .
\)
Thus, whenever the target function is defined as \(f=\partial_j^k g\),
the case \(k=0\) corresponds to the direct problem \(f=g\). In this
case the coordinate \(j\) is immaterial for the definition of \(f\),
but we keep it in the notation because the estimator introduced below
uses an auxiliary perturbation in the \(e_j\)-direction. For \(k\ge1\),
\(f=\partial_j^k g\) is the \(k\)-th  partial derivative
of \(g\) over the coordinate direction $j$.

For $L>0$ and $\beta>0$, the H\"older class
$\mathcal{F}_\beta(L)$ is defined in Section~\ref{sec:prel}. A
continuously differentiable function $f:\mathbb{R}^d\to\mathbb{R}$ is said to be
$\alpha$-strongly convex on a convex set $S\subset\mathbb{R}^d$
if
\[
f(y)\geq f(x)+\langle\nabla f(x),y-x\rangle
+\tfrac{\alpha}{2}\|y-x\|^2,\qquad\forall x,y\in S.
\]

For parameters $L,L_f,G,G_g,\bar{L}>0$, $\alpha>0$ and
$\beta>1$, we define the class
$\mathcal{F}'_{\alpha,\beta,k}(L,L_f,G,G_g,\bar{L})$ of all
functions $g:\mathbb{R}^d\to\mathbb{R}$ such that, with
$f:=\partial_j^k g$:
\begin{enumerate}
\item[(a)] $g\in\mathcal{F}_{\beta+k}(L)$;
\item[(b)] $f$ is $\alpha$-strongly convex on $\Theta$;
\item[(c)] $\nabla f$ is $L_f$-Lipschitz on $\Theta$ with respect to the Euclidean norm;
\item[(d)] $\|\nabla f(x)\|\leq G$ for all $x\in\Theta$;
\item[(e)] $\|\nabla g(x)\|\leq G_g$ for all $x\in\Theta^+$;
\item[(f)] $\nabla g$ is $\bar{L}$-Lipschitz on $\Theta^+$ with respect to the Euclidean norm;
\item[(g)] $f$ attains its minimum on $\Theta$ at a point
$x^\star\in\mathrm{int}(\Theta)$.
\end{enumerate}
The minimizer $x^\star$ in (g) is unique by (b).

\paragraph{Contributions:}
\begin{itemize}
    \item \textbf{Bias--variance control for derivative estimators.} In Section~\ref{sec:main}, we analyze a kernel based estimator of the gradient $\nabla f(x)$ 
    constructed from 
    paired function
    queries. Lemma~\ref{lem:bias-multi} provides a non-asymptotic bound on the smoothing bias.
    Lemma~\ref{lem:var-multi-second-moment}
    establishes a second moment bound that captures the joint effects of noise variance, smoothing
    and the scale of $\nabla f$. Together, these results characterize the fundamental bias--variance trade-off governing derivative estimation from zero-order data.

   \item \textbf{Finite-sample convergence rates for gradient-free 
optimization.} In Section~\ref{sec:main}, we study a projected stochastic 
gradient algorithm driven by the proposed kernel-based gradient 
approximation. Theorem~\ref{thm:upper-bound} establishes the following 
non-asymptotic upper bound on the expected optimization error. For a 
suitable choice of parameters of the algorithm, the algorithm returns an estimator $x_T$ such that after $T$ steps with $2d$ queries per step,
\[
\sup_{g \in \mathcal{F}'_{\alpha,\beta,k}(L,L_f,G,G_g,\bar{L})}
\mathbb{E}\bigl[f(x_T)-\min_{x\in \Theta}f(x)\bigr]
\;\le\; C\,d^{(2\beta+k-1)/(\beta+k)}\,N^{-(\beta-1)/(\beta+k)},
\]
where $N = 2dT$ is the total number of queries, and $C > 0$ is a 
constant independent of $N$ and $d$.

   \item \textbf{Matching minimax lower bound.}
We complement the above upper bound with a minimax lower bound 
establishing that the rate $N^{-(\beta-1)/(\beta+k)}$ is optimal for 
$\beta \geq 2$. Theorem~\ref{thm:lower-bound-bumps} proves that
\[
\inf_{\mathrm{alg}\;\Phi}\sup_{g \in \mathcal{F}'_{\alpha,\beta,k}(L,L_f,G,G_g,\bar{L})}
\mathbb{E}\bigl[f(\hat{x}_N(\Phi))-\min_{x\in \Theta}f(x)\bigr]
\;\ge\; c\,N^{-(\beta-1)/(\beta+k)},
\]
where the infimum is over all sequential algorithms $\Phi$ of choosing $N$ query points and all estimators $\hat{x}_N=\hat{x}_N(\Phi)$ based on these queries, and $c > 0$ is a constant independent of  
$N$ and~$d$. 
\end{itemize}

This paper is organized as follows. Section~\ref{sec:problem} introduces the statement of the problem. Section~\ref{sec:related} discusses some related work. Section~\ref{sec:prel} presents the
smoothness assumptions and the proposed kernel-based estimator. Section~\ref{sec:main} contains the main results.  The proofs are deferred to Section~\ref{sec:proofs}.

\section{Problem setting}
\label{sec:problem}

Let $g:\mathbb{R}^d\to\mathbb{R}$ be an unknown function 
belonging to the class 
$\mathcal{F}'_{\alpha,\beta,k}(L,L_f,G,G_g,\bar{L})$ defined in 
the introduction, for some $\alpha,L,L_f,G,G_g,\bar{L}>0$, 
$\beta>1$, and $k\geq 0$. We consider the target function  
$f:=\partial_j^k g$ that 
is the $k$-th order partial derivative of $g$ 
along the coordinate direction $j\in\{1,\dots,d\}$. We 
study the problem of minimizing $f$ over the compact convex 
set $\Theta$, that is, of approximating
\[
x^\star:=\argmin_{x\in\Theta}f(x),
\qquad
f^\star:=f(x^\star),
\]
based on noisy zero-order evaluations of $g$ at query points 
that the learner chooses sequentially.

\medskip
\noindent\textit{Sequential oracle.}
The data is generated by a sequential interaction between the
learner and a stochastic oracle. At each instance $t=1,2,\ldots,N$,
the learner selects a query point $z_t\in\mathbb{R}^d$ and
observes
\begin{equation}\label{observations}
 y_t = g(z_t)+\xi_t,   
\end{equation}
where $\xi_t$ is a random variable with $\mathbb{E}[\xi_t^2]\leq\sigma^2$.

\begin{definition}[Sequential algorithm]
\label{def:seq-alg}
A \emph{sequential algorithm} is any procedure for choosing the query
points $z_1,\dots,z_N\in\mathbb{R}^d$ such that, for every
$t\in\{1,\dots,N\}$,
\[
z_t=F_t\bigl((z_i,y_i)_{i=1}^{t-1},\,\zeta_t\bigr),
\]
where $F_t$ is a measurable function and $\zeta_t$ is a randomization
variable chosen by the learner, independent of the past observations
$(z_i,y_i)_{i=1}^{t-1}$. The randomization variables $(\zeta_t)_t$ are
independent of the noise sequence $(\xi_t)_t$. 
\end{definition}

We refer to $N$ as the \emph{oracle budget}. When the algorithm we
analyze in Section~\ref{sec:prel} structures the queries into $T$
steps of $2d$ queries (so that $N=2dT$), we will write
$\hat{x}_N=x_T$, where $x_T$ denotes the iterate produced after the
$T$-th step.

\medskip
\noindent\textit{Performance criterion.}
The performance of an estimator $\hat{x}_N$ is 
measured by the optimization error
\[
\mathbb{E}\bigl[f(\hat{x}_N)-f^\star\bigr].
\]
We find the minimax optimal rate of decay of this quantity 
on the class $\mathcal{F}'_{\alpha,\beta,k}(L,L_f,G,G_g,\bar{L})$. The upper and lower bounds that we 
establish in Theorems~\ref{thm:upper-bound} 
and~\ref{thm:lower-bound-bumps}, respectively, identify the optimal rate as a function of $N$.

\medskip
\noindent\textit{Indirect observations.}
Our analysis covers all $k\geq 0$. The novel regime is $k\geq 1$, which
differs fundamentally from the standard zero-order optimization problem
corresponding to $k=0$, for which the minimax rates are studied 
in~\cite{polyak1990optimal,shamir2013complexity,akhavan2020exploiting,
akhavan2024gradient}. For $k\geq 1$, the learner does not observe noisy
values of the target function $f$ itself but only noisy values of $g$, of which $f$ is a $k$-th order derivative. Recovering $f$ therefore requires 
extracting derivative information from zero-order data. This is an 
indirect optimization problem in the spirit of inverse problems 
in nonparametric statistics.  Establishing matching lower bounds requires 
constructing families of functions $g$ that are nearly 
indistinguishable from noisy queries of $g$ alone, yet 
induce well-separated minimizers for their $k$-th derivatives. 
We carry out such a construction in 
Theorem~\ref{thm:lower-bound-bumps}.


\section{Related work}
\label{sec:related}

A major part of related work deals with zero-order optimization, where the goal is to minimize an unknown function using only noisy function queries updated in a sequential manner. This literature originates from stochastic approximation methods such as the Kiefer--Wolfowitz procedure 
\citep{kiefer1952stochastic} and random perturbation schemes mentioned by \cite{nemirovski1983mirror} and
\citep{spall2002multivariate} among others, and has developed into a rich theory encompassing complexity bounds and optimal algorithms under different observation scenarios. Representative results include 
convergence guarantees under bandit and stochastic feedback, see 
\citep{flaxman2004online,agarwal2010optimal,jamieson2012,ghadimi2013stochastic,
duchi2015optimal,shamir2017optimal,balasubramanian2021zeroth,nesterov2017random} and the references cited therein. This work primarily focuses on the settings where the target function and/or its gradient are Lipschitz continuous and additionally the target function is convex or strongly convex. 

A related line of research investigates how higher order smoothness of the target function can improve gradient estimation and optimization error in zero-order and bandit settings. Using smoothing and randomization techniques, several works show that additional regularity can reduce estimator bias and accelerate 
optimization \citep{polyak1990optimal,bach2016highly,akhavan2020exploiting,akhavan2021distributed,novitskii2021improved,akhavan2024gradient,yu2024stochastic,Akhavan2025GradientfreeSO}. These 
contributions deal with the case $k=0$ and do not address the statistical complexity of optimizing higher order derivatives from zero-order data.

Our work is also related to the literature on optimization of functions in nonparametric regression and density estimation settings. There, the main difference is that the observations are i.i.d. rather than sequentially chosen, so that one deals with a passive rather than active scheme of observation. Consequently, the best rates of estimating the mode of the probability density \cite{tsybakov1990recursive} and the minimizer of the nonparametric regression function in the passive scheme (\cite{tsybakov1990passive}, see also \cite{nazin-polyak-tsybakov-1989,nazin-polyak-tsybakov,passive2022,akhavan2024estimating}) are substantially slower than in the active (sequential) scheme that we consider here. They mimic the classical nonparametric estimation rates and depend exponentially on the dimension. Again, this literature does not cover optimization of the derivative of order $k$. The only exception is the paper \cite{wieczorek2010optimal} establishing the minimax rate for estimation of the mode of probability density in one-dimensional deconvolution problem.

To the best of our knowledge, no prior work provides a minimax analysis for optimizing higher order derivatives using noisy zero-order queries under a sequential observation scheme. The present paper fills this gap by establishing matching upper and lower bounds showing how smoothness, dimension, and the order of derivative jointly define the 
fundamental statistical limits of the problem, with the optimal rate $N^{-(\beta-1)/(\beta+k)}$. 

\section{Preliminaries}
\label{sec:prel}

This section introduces the H\"older class, the oracle model 
implementing the gradient estimator and the kernel-based 
construction used throughout the analysis.

\subsection{H\"older class}

For $\beta>0$ we let $\ell=\lfloor\beta\rfloor$ denote the largest integer 
strictly less than $\beta$. 
We define the H\"older class 
$\mathcal{F}_\beta(L)$ as the set of all functions 
$\phi:\mathbb{R}^d\to\mathbb{R}$ that are $\ell$ times 
continuously differentiable on $\mathbb{R}^d$ and satisfy
\begin{equation}
\label{eq:holder}
\biggl|\phi(z)-\sum_{|m|\leq\ell}\frac{1}{m!}D^m\phi(x)(z-x)^m
\biggr|\leq L\|z-x\|^\beta,
\qquad\forall\,x,z\in\mathbb{R}^d,
\end{equation}
where $m=(m_1,\dots,m_d)\in\mathbb{N}^d$ is a multi-index, 
$|m|=\sum_{i=1}^d m_i$, $m!=m_1!\cdots m_d!$ and 
$D^m\phi=\partial_1^{m_1}\cdots\partial_d^{m_d}\phi$. 

\medskip

The smoothness assumption underlying our analysis is that 
$g\in\mathcal{F}'_{\alpha,\beta,k}(L,L_f,G,G_g,\bar{L})$ for some 
$\beta>1$ and $k\geq 0$. The condition $\beta>1$ entails 
$\lfloor\beta+k\rfloor\geq k+1$ so that $g$ admits at least 
$k+1$ continuous derivatives on $\mathbb{R}^d$. By Schwarz's 
theorem, mixed partial derivatives of order $k+1$ of $g$ commute 
and therefore the partial derivatives of $f=\partial_j^k g$ 
satisfy
\begin{equation}
\label{eq:schwarz}
\partial_l f(x)=\partial_l\partial_j^k g(x)
=\partial_j^k\partial_l g(x),\qquad l=1,\dots,d.
\end{equation}

By condition (g) of the class, the minimizer 
$x^\star\in\mathrm{int}(\Theta)$ satisfies the first-order 
optimality condition $\nabla f(x^\star)=0$, which combined with 
the $L_f$-Lipschitz continuity of $\nabla f$ on $\Theta$ yields 
the standard smoothness inequality
\begin{equation}
\label{eq:smoothness}
f(x)-f(x^\star)\leq\frac{L_f}{2}\|x-x^\star\|^2,
\qquad\forall\,x\in\Theta.
\end{equation}

\subsection{Oracle model and gradient estimator}

The algorithm we analyze structures the $N$ queries into 
$T$ steps of $2d$ queries with one pair per 
coordinate direction. Thus, we consider $N=2dT$. At step 
$t\in\{0,\dots,T-1\}$, the current iterate $x_t\in\Theta$ is obtained from the projected stochastic 
gradient recursion given by
\begin{equation}
\label{eq:sgd}
x_{t+1}:=\Pi_\Theta\bigl(x_t-\eta_t\,\tilde{g}_t\bigr),
\qquad t=0,1,\dots,T-1,
\end{equation}
initialized at an arbitrary $x_0\in\Theta$, where $\tilde{g}_t$ is a gradient estimator and $\eta_t >0$ is a step size. We define the query points and $\tilde{g}_t$ in the following manner. For a smoothing parameter $h>0$, for each coordinate 
$l\in\{1,\dots,d\}$, the algorithm queries the oracle at the two 
points 
$x_t+hU_{t,l}e_j\pm hV_{t,l}e_l\in\mathbb{R}^d$ and observes
\begin{equation}
\label{eq:oracle}
y^+_{t,l}=g(x_t+hU_{t,l}e_j+hV_{t,l}e_l)+\varepsilon^+_{t,l},
\qquad
y^-_{t,l}=g(x_t+hU_{t,l}e_j-hV_{t,l}e_l)+\varepsilon^-_{t,l}.
\end{equation}
The randomization variables $U_{t,l},V_{t,l}$, chosen by the learner, are \iid uniform 
on $[-1/2,1/2]$ and the noise variables 
$\varepsilon^\pm_{t,l}$ are such that
$\mathbb{E}[(\varepsilon^\pm_{t,l})^2]\leq\sigma^2$. The set of randomization all variables $(U_{t,l},V_{t,l})$ is independent of the set of all noise variables $(\varepsilon^\pm_{t,l})$. Note that no independence or zero-mean assumption is imposed on the noises $\varepsilon^\pm_{t,l}$. 

\medskip

Set $\ell_*:=\lfloor\beta+k\rfloor=k+\lfloor\beta\rfloor$. The 
gradient estimator relies on two bounded kernels supported on 
$[-1/2,1/2]$. The first kernel $K_k:[-1/2,1/2]\to\mathbb{R}$, is 
associated with the derivative direction $j$ and is required to 
satisfy the moment conditions
\begin{equation}
\label{eq:Kk-moments}
\int u^a K_k(u)\,du=0\quad\text{for }a\in\{0,\dots,\ell_*\}\setminus\{k\},
\qquad
\int u^k K_k(u)\,du=k!.
\end{equation}
The second, $K_1:[-1/2,1/2]\to\mathbb{R}$, is associated with 
the gradient direction $l$ and is required to satisfy
\begin{equation}
\label{eq:K1-moments}
\int v^b K_1(v)\,dv=0\quad\text{for } b=0 \text{ and for odd } b\in\{3,5,\dots,\ell_*\},
\qquad
\int v\,K_1(v)\,dv=1.
\end{equation}
Bounded kernels satisfying these conditions can be constructed 
explicitly as weighted sums of Legendre polynomials; 
see~\citep[Section~1.2.2]{tsybakov2009nonparametric} or~\cite{bach2016highly}. We denote 
by $\|K_k\|_\infty,\|K_1\|_\infty<\infty$ their sup-norms and we 
note that $\mathbb{E}[K_k(U)^2]$ and $\mathbb{E}[K_1(V)^2]$ are 
finite constants depending only on $k$ and $\ell_*$.

\medskip

The kernel-based estimator of the $l$-th component of 
$\nabla f(x_t)$ is then defined by
\begin{equation}
\label{eq:grad-estimator}
\tilde{g}_{t,l}
:=\frac{K_k(U_{t,l})\,K_1(V_{t,l})}{2h^{k+1}}
\bigl(y^+_{t,l}-y^-_{t,l}\bigr),\qquad l=1,\dots,d.
\end{equation}
The perturbation along $e_j$ paired with the kernel $K_k$ 
extracts the $k$-th order derivative in direction $j$, while the 
perturbation along $e_l$ paired with $K_1$ extracts the 
first-order derivative in direction $l$. Together, they recover 
$\partial_l\partial_j^k g(x_t)=\partial_l f(x_t)$ up to a bias of 
order $h^{\beta-1}$, as established in 
Lemma~\ref{lem:bias-multi}. We denote by 
$\tilde{g}_t:=(\tilde{g}_{t,1},\dots,\tilde{g}_{t,d})\in
\mathbb{R}^d$ the resulting full gradient estimator.



\section{Main results}
\label{sec:main}

This section presents our two main results which are a non-asymptotic upper 
bound on the optimization error of the projected stochastic gradient 
algorithm driven by the kernel-based estimator and a matching 
minimax lower bound. The proofs are deferred to 
Section~\ref{sec:proofs}.

\subsection{Upper bound}

The analysis of the projected stochastic gradient algorithm 
relies on a sharp control of the bias and the second moment of 
the gradient estimator $\tilde{g}_t$. The next two lemmas 
establish such bounds.

\begin{lemma}[Bias of the gradient estimator]
\label{lem:bias-multi}
Let $g\in\mathcal{F}_{\beta+k}(L)$ with $\beta>1$ and $k\geq 0$. 
For every $x_t\in\mathbb{R}^d$ and every $l\in\{1,\dots,d\}$,
\[
\bigl|\mathbb{E}[\tilde{g}_{t,l}\mid x_t]-\partial_l f(x_t)\bigr|
\leq C_{\mathrm{bias}}\,h^{\beta-1},
\]
where 
\[
C_{\mathrm{bias}}
:=L\,\bigl(\mathbb{E}[K_k(U)^2]\bigr)^{1/2}
\bigl(\mathbb{E}[K_1(V)^2]\bigr)^{1/2}.
\]
Consequently,
\[
\bigl\|\mathbb{E}[\tilde{g}_t\mid x_t]-\nabla f(x_t)\bigr\|^2
\leq d\,C_{\mathrm{bias}}^2\,h^{2(\beta-1)}.
\]
\end{lemma}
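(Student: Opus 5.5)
We condition on $x_t=x$ and write the expectation over the randomization and the noise. The noise terms $\varepsilon^\pm_{t,l}$ are independent of $(U,V)=(U_{t,l},V_{t,l})$ but are not assumed to have mean zero. Their contribution is nevertheless
\[
\mathbb{E}\bigl[K_k(U)K_1(V)\bigr]\,\mathbb{E}\bigl[\varepsilon^+-\varepsilon^-\mid x\bigr]\big/(2h^{k+1}),
\]
and it vanishes because $\int K_1=0$. Hence
\[
\mathbb{E}[\tilde g_{t,l}\mid x]=\frac{1}{2h^{k+1}}\,\mathbb{E}\Bigl[K_k(U)K_1(V)\bigl(g(x+hUe_j+hVe_l)-g(x+hUe_j-hVe_l)\bigr)\Bigr].
\]
The substitution $V\mapsto -V$ maps the second term to $g(x+hUe_j+hVe_l)$ weighted by $K_1(-v)$. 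So we can equivalently use the antisymmetrized kernel $\tfrac12(K_1(v)-K_1(-v))$ applied to the single term $g(x+hUe_j+hVe_l)$ with prefactor $1/h^{k+1}$. The odd part of $K_1$ has vanishing even moments automatically, and by \eqref{eq:K1-moments} its odd moments vanish except the first, which equals $1$.

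Next we Taylor expand $g$ at $x$ to order $\ell_*=\lfloor\beta+k\rfloor$ using the Hölder condition \eqref{eq:holder} with $z=x+h(Ue_j+Ve_l)$. The polynomial part is a sum of monomials $c_m h^{|m|}U^aV^b$, where $a$ is the power of the $e_j$-coordinate and $b$ that of the $e_l$-coordinate. If $l=j$ this becomes $h^{|m|}(U+V)^{|m|}$, which we expand binomially into the same $U^aV^b$ form. Integrating against $K_k(u)$ kills every term except $a=k$ (for $a\le\ell_*$), and integrating against the odd part of $K_1$ kills every $b\le\ell_*$ except $b=1$. The only surviving monomial is $\frac{1}{k!}\partial_j^k\partial_l g(x)\,h^{k+1}U^kV$ when $l\neq j$. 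When $l=j$ it is the $(k+1)$-th derivative with coefficient $\frac{1}{(k+1)!}\binom{k+1}{k}=\frac{1}{k!}$ times $h^{k+1}$. After integration this gives $k!\cdot 1\cdot\frac{1}{k!}\,\partial_l\partial_j^k g(x)\,h^{k+1}$, and dividing by $h^{k+1}$ yields exactly $\partial_l f(x)$ by \eqref{eq:schwarz}. The step that needs the most care is this bookkeeping: $a+b\le\ell_*$ ensures each index lies in the range where the moment conditions apply, and the case $l=j$ must be handled so that the cross terms with $a+b=k+1$ and $a\ne k$ are killed (they are, by the $K_k$ moments, since $a\le k+1\le\ell_*$ and $a\neq k$).

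Finally, the remainder is bounded by $L\|h(Ue_j+Ve_l)\|^{\beta+k}$. Writing the antisymmetrization as half the difference of the two remainders, the bias is at most
\[
\frac{1}{h^{k+1}}\,\mathbb{E}\bigl[|K_k(U)||K_1(V)|\,L\,h^{\beta+k}\,\|Ue_j+Ve_l\|^{\beta+k}\bigr].
\]
Since $|U|,|V|\le 1/2$, we have $\|Ue_j+Ve_l\|\le 1/\sqrt2\le1$ when $l\ne j$, and $|U+V|\le 1$ when $l=j$, so the norm factor is at most $1$. Cauchy–Schwarz together with the independence of $U$ and $V$ gives $\mathbb{E}|K_k(U)||K_1(V)|\le(\mathbb{E}K_k^2)^{1/2}(\mathbb{E}K_1^2)^{1/2}$. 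This yields $C_{\mathrm{bias}}h^{\beta-1}$. Summing the squares of the $d$ coordinates gives the vector bound.
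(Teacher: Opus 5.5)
Your proof is correct and follows essentially the same route as the paper. The noise term vanishes because $\int K_1=0$, $g$ is Taylor-expanded to order $\ell_*$, and the kernel moment conditions leave only the $(a,b)=(k,1)$ monomial, which gives $\partial_l f(x_t)$ by Schwarz; the remainder is then bounded by $Lh^{\beta+k}\,\mathbb{E}|K_k(U)||K_1(V)|$ and handled with Cauchy--Schwarz. Your antisymmetrization of $K_1$ via $V\mapsto -V$ is equivalent to the paper's subtraction of the two $\pm$ expansions, and your explicit binomial treatment of $l=j$ spells out what the paper covers implicitly through the notation $\partial_j^a\partial_l^b$.
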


\begin{lemma}[Second moment of the gradient estimator]
\label{lem:var-multi-second-moment}
Let $g\in\mathcal{F}'_{\alpha,\beta,k}(L,L_f,G,G_g,\bar{L})$. For 
any $x_t\in\Theta$ and $h\in(0,1]$,
\[
\mathbb{E}\bigl[\|\tilde{g}_t\|^2\mid x_t\bigr]
\leq 4G^2+4d\,C_{\mathrm{bias}}^2\,h^{2(\beta-1)}
+2d\,C_{\mathrm{var}}\,h^{-2(k+1)},
\]
where 
\[
C_{\mathrm{var}}
:=\frac{\mathbb{E}[K_k(U)^2]\,\mathbb{E}[K_1(V)^2]}{4}
\Bigl(\frac{3\bar{L}^2}{16}+6G_g^2+8\sigma^2\Bigr).
\]
\end{lemma}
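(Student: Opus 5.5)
We plan to prove the bound by splitting the second moment into a ``mean'' part and a ``fluctuation'' part, and then controlling the fluctuation coordinatewise. Writing $m_t:=\mathbb{E}[\tilde g_t\mid x_t]$, we use $\|\tilde g_t\|^2\le 2\|m_t\|^2+2\|\tilde g_t-m_t\|^2$. Alternatively, we may bound $\mathbb{E}[\|\tilde g_t\|^2\mid x_t]$ directly by $\|m_t\|^2+\sum_l\mathbb{E}[\tilde g_{t,l}^2\mid x_t]$ and keep the factor $2$ only where needed. For the mean we write $\|m_t\|^2\le 2\|\nabla f(x_t)\|^2+2\|m_t-\nabla f(x_t)\|^2$. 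Condition (d) together with Lemma~\ref{lem:bias-multi} then gives $\|m_t\|^2\le 2G^2+2dC_{\mathrm{bias}}^2h^{2(\beta-1)}$. Doubling this yields the first two terms $4G^2+4dC_{\mathrm{bias}}^2h^{2(\beta-1)}$. It then remains to show $\mathbb{E}[\tilde g_{t,l}^2\mid x_t]\le C_{\mathrm{var}}h^{-2(k+1)}$ for each $l$; after the factor $2$ and the sum over $l$, this produces $2dC_{\mathrm{var}}h^{-2(k+1)}$.

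For the per-coordinate bound, we use the independence of $(U,V)$ to factor out the kernels:
\[
\mathbb{E}[\tilde g_{t,l}^2\mid x_t]=\frac{1}{4h^{2k+2}}\,\mathbb{E}\bigl[K_k(U)^2K_1(V)^2(y^+-y^-)^2\bigr].
\]
The difference splits as $y^+-y^-=D+(\varepsilon^+-\varepsilon^-)$, where $D:=g(x_t+hUe_j+hVe_l)-g(x_t+hUe_j-hVe_l)$. We will split $(y^+-y^-)^2$ into a signal term and a noise term using $(a+b)^2\le 2a^2+2b^2$, and possibly also use $(\varepsilon^+-\varepsilon^-)^2\le 2(\varepsilon^+)^2+2(\varepsilon^-)^2$. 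The noise is independent of $(U,V)$ and has second moment at most $\sigma^2$, so the noise contribution is bounded by a multiple of $\sigma^2\,\mathbb{E}[K_k^2]\mathbb{E}[K_1^2]$; the constant $8\sigma^2$ fits $2\cdot 4\sigma^2$. No zero-mean assumption on the noise is needed for this step.

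The signal term is where conditions (e) and (f) on $\Theta^+$ enter, and we expect it to be the main obstacle. We plan to use the first-order Taylor expansion of $g$ around the point $x_t+hUe_j$. Since $h\le1$ and $|U|,|V|\le 1/2$, all the points involved lie within distance $h/\sqrt2\le1$ of $x_t\in\Theta$, hence in $\Theta^+$. The expansion gives $D=2hV\,\partial_l g(x_t+hUe_j)+r$ with $|r|\le \bar L h^2V^2$ (two remainders, each bounded by $\bar L h^2V^2/2$). Then $D^2\le 2(4h^2V^2G_g^2)+2\bar L^2h^4V^4$. With $V^2\le 1/4$ and $h\le 1$, this yields a bound of the form $c_1G_g^2+c_2\bar L^2$. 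The difficulty is that the factors $V^2$ cannot be integrated separately from $K_1(V)^2$, so we bound them by their suprema. Matching exactly the constants $6G_g^2$ and $3\bar L^2/16$ will require choosing the splitting weights suitably, for example $(a+b)^2\le \tfrac32a^2+3b^2$. If the constants come out slightly different, we would fix them by adjusting these Young-type weights rather than changing the strategy. Since $h\le1$, every signal power $h^2,h^4$ is at most $1$, and the only $h$-dependence that survives is the prefactor $h^{-2(k+1)}$. Collecting the signal and noise terms gives the claimed $C_{\mathrm{var}}$.
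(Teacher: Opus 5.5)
Your proposal is correct and follows essentially the same route as the paper. The steps coincide: the split $2\|m_t\|^2+2\sum_l\mathbb{E}[\tilde g_{t,l}^2\mid x_t]$, the signal/noise split via $(a+b)^2\le 2a^2+2b^2$ giving $8\sigma^2$, and a first-order expansion of $g$ around $x_t+hUe_j$ using (e)--(f) on $\Theta^+$, with $|V|\le 1/2$ bounded by its supremum. The paper writes $D=P+Q+R$ and uses $(P+Q+R)^2\le 3(P^2+Q^2+R^2)$; this is exactly your weighted inequality with weight $\tfrac32$ on the remainder $r$ and weight $3$ on the linear term $2hV\partial_l g$. This assignment, and not the reverse, yields $\tfrac{3\bar L^2h^4}{32}+3G_g^2h^2$ and hence the stated $C_{\mathrm{var}}$.
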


The proofs of Lemmas~\ref{lem:bias-multi} 
and~\ref{lem:var-multi-second-moment} are given in 
Section~\ref{sec:proofs}. The bias bound is governed by the 
H\"older regularity $\beta$ of $f$ and exploits the moment 
cancellation properties of the 
kernels~\eqref{eq:Kk-moments}--\eqref{eq:K1-moments}. The variance 
bound captures the joint contributions of the noise variance 
$\sigma^2$, the H\"older constant $\bar{L}$ of $\nabla g$ and the 
gradient bound $G_g$, all amplified by the factor $h^{-2(k+1)}$ 
arising from the $h^{-(k+1)}$ scaling of the estimator. 
Combining these two bounds with a \cite{chung1954stochastic} type recursion 
yields the following non-asymptotic convergence rate.

\begin{theorem}[Upper bound]
\label{thm:upper-bound}
Let $\beta\ge 2$, $k\geq 0$, and let $\alpha,L,L_f,G,G_g,\bar{L}>0$ 
be given constants. Consider the projected stochastic gradient 
algorithm~\eqref{eq:sgd} with step size $\eta_t=\gamma/(t+t_0)$, 
where $\gamma\geq 4/\alpha$ and $t_0\geq\alpha\gamma$ and smoothing
parameter $h=\min\bigl(\kappa\,T^{-1/(2(\beta+k))},1\bigr)$ for 
some $\kappa>0$. Set $N=2dT$. Then, for any initialization 
$x_0\in\Theta$,
\[
\sup_{g\in\mathcal{F}'_{\alpha,\beta,k}(L,L_f,G,G_g,\bar{L})}
\mathbb{E}\bigl[f(x_T)-f(x^\star)\bigr]
\leq C\,d^{(2\beta+k-1)/(\beta+k)}\,N^{-(\beta-1)/(\beta+k)},
\]
where $C>0$ is a constant depending only on 
$\alpha,\gamma,t_0,\kappa,L_f,G,R,C_{\mathrm{bias}},C_{\mathrm{var}}$ 
and independent of $N$ and $d$. 
\end{theorem}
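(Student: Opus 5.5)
We will analyze the squared distance $r_t:=\mathbb{E}\|x_t-x^\star\|^2$ through a Chung-type recursion and then convert it into an optimization error using the smoothness inequality~\eqref{eq:smoothness}, which gives $\mathbb{E}[f(x_T)-f^\star]\le \frac{L_f}{2} r_T$.

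\textbf{One-step inequality.} Since $x^\star\in\Theta$ and $\Pi_\Theta$ is nonexpansive,
\[
\|x_{t+1}-x^\star\|^2\le \|x_t-x^\star\|^2-2\eta_t\langle \tilde g_t,x_t-x^\star\rangle+\eta_t^2\|\tilde g_t\|^2 .
\]
We condition on $x_t$ and write $\mathbb{E}[\tilde g_t\mid x_t]=\nabla f(x_t)+b_t$. Lemma~\ref{lem:bias-multi} gives $\|b_t\|^2\le dC_{\mathrm{bias}}^2h^{2(\beta-1)}$. Strong convexity on $\Theta$ yields $\langle\nabla f(x_t),x_t-x^\star\rangle\ge \alpha\|x_t-x^\star\|^2$, obtained by adding the two strong-convexity inequalities with $\nabla f(x^\star)=0$. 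For the bias term we use Young's inequality, $2|\langle b_t,x_t-x^\star\rangle|\le \frac{\alpha}{2}\|x_t-x^\star\|^2+\frac{2}{\alpha}\|b_t\|^2$. Lemma~\ref{lem:var-multi-second-moment} (valid since $h\le1$) bounds the second moment. Taking expectations, we get
\[
r_{t+1}\le \Bigl(1-\tfrac{3\alpha}{2}\eta_t\Bigr)r_t+\tfrac{2}{\alpha}\eta_t\, dC_{\mathrm{bias}}^2h^{2(\beta-1)}+\eta_t^2\bigl(4G^2+4dC_{\mathrm{bias}}^2h^{2(\beta-1)}+2dC_{\mathrm{var}}h^{-2(k+1)}\bigr).
\]

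\textbf{Solving the recursion.} With $\eta_t=\gamma/(t+t_0)$ and $\gamma\ge4/\alpha$, the contraction factor is at most $1-\frac{a}{t+t_0}$ with $a:=\frac{3\alpha\gamma}{2}\ge 6>1$. The condition $t_0\ge\alpha\gamma$ keeps the factor in $[0,1)$. We then apply a Chung-type lemma: if $r_{t+1}\le(1-\frac{a}{t+t_0})r_t+\frac{A}{t+t_0}+\frac{B}{(t+t_0)^2}$ with $a>1$, then
\[
r_T\le \Bigl(\tfrac{t_0}{T+t_0}\Bigr)^{a}r_0+\tfrac{A}{a}+\tfrac{c\,B}{(a-1)(T+t_0)}.
\]
We would prove this by induction, or by unrolling the product $\prod_s(1-\frac{a}{s+t_0})\le(\frac{t+t_0}{T+t_0})^a$ and summing. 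Here $A\asymp \gamma dC_{\mathrm{bias}}^2h^{2(\beta-1)}/\alpha$, $B\asymp\gamma^2(G^2+dh^{2(\beta-1)}+dC_{\mathrm{var}}h^{-2(k+1)})$ and $r_0\le R^2$. This gives
\[
r_T\lesssim \frac{R^2}{T^{a}}+\frac{G^2}{T}+d\,h^{2(\beta-1)}+\frac{d\,h^{-2(k+1)}}{T}.
\]
Because $a>1$, the first term is $O(1/T)$.

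\textbf{Choice of $h$ and conversion to $N$.} Balancing $dh^{2(\beta-1)}$ against $dh^{-2(k+1)}/T$ gives $h\asymp T^{-1/(2(\beta+k))}$, which is the prescribed $h$. Both terms are then of order $d\,T^{-(\beta-1)/(\beta+k)}$. If the minimum defining $h$ equals $1$, then $T\le\kappa^{2(\beta+k)}$, and all terms are bounded by constants times $d\,T^{-(\beta-1)/(\beta+k)}$ up to $\kappa$-dependent factors. The term $G^2/T$ is dominated since $(\beta-1)/(\beta+k)<1$. Substituting $T=N/(2d)$ gives $d\cdot d^{(\beta-1)/(\beta+k)}N^{-(\beta-1)/(\beta+k)}=d^{(2\beta+k-1)/(\beta+k)}N^{-(\beta-1)/(\beta+k)}$, as claimed. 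The hypothesis $\beta\ge2$ appears only through Lemma~\ref{lem:var-multi-second-moment}'s class membership and makes the exponent positive.

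\textbf{Main obstacle.} We expect the main difficulty to be the careful Chung-lemma step with step sizes $\gamma/(t+t_0)$, where the constant-order bias term $A$ sits inside the recursion. We need the bias to enter only as $A/a$ and not accumulate with a $\log T$ factor. This will be handled by unrolling and bounding $\sum_t\frac{1}{t+t_0}\prod_{s>t}(1-\frac{a}{s+t_0})\le \frac{1}{a}$, using $1-\prod(1-\cdot)$ telescoping. Uniformity over the class follows because every constant depends only on the listed parameters.
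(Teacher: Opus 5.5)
Your plan follows the paper's proof step for step: non-expansiveness of $\Pi_\Theta$, Young's inequality for the bias with $\varepsilon=\alpha/2$, Lemmas~\ref{lem:bias-multi} and~\ref{lem:var-multi-second-moment}, a Chung-type lemma, the smoothness inequality~\eqref{eq:smoothness}, the two regimes for $h$, and $T=N/(2d)$. There are two small differences. You use the sharper monotonicity bound $\langle\nabla f(x_t),x_t-x^\star\rangle\ge\alpha\|x_t-x^\star\|^2$, where the paper uses $\alpha/2$. You also bound the $A$-term by exact telescoping, $\sum_t\frac{a}{t+t_0}w_{t+1,T}=1-w_{0,T}$, which gives $A/a$ instead of the paper's integral-comparison bound $2A/q$.

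One step is false as written. With $a=3\alpha\gamma/2$, the hypothesis $t_0\ge\alpha\gamma$ does not keep $1-\frac{a}{t+t_0}$ in $[0,1)$: for $t_0=\alpha\gamma$ and $t=0$ the factor equals $-1/2$. Your unrolling relies on nonnegative factors in three places:
\begin{itemize}
\item Substituting the bound on $r_t$ into $(1-\frac{a}{t+t_0})r_t$ reverses the inequality when the factor is negative.
\item The product estimate via $1-x\le e^{-x}$ does not control $|1-x|$ for $x>1$.
\item The bound $1-w_{0,T}\le 1$ requires $w_{0,T}\ge0$.
\end{itemize}
The theorem's conditions $\gamma\ge4/\alpha$, $t_0\ge\alpha\gamma$ are calibrated for the paper's rate $q=\alpha\gamma/2$; they are exactly $q\ge2$ and $t_0\ge2q$.

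The repair is immediate. Since $r_t\ge0$, you may weaken the contraction to $1-\alpha\eta_t$, that is, take $a=\alpha\gamma\ge4$. Then $t_0\ge a$ gives factors in $[0,1)$ and the shift constant satisfies $(1+1/t_0)^a\le e$. Your telescoping and $B$-term integral comparison then go through unchanged.

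A minor point: $\beta\ge2$ is not needed for the upper bound. The exponent $(\beta-1)/(\beta+k)$ is already positive for $\beta>1$, and the paper's argument uses only $\beta>1$.
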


\begin{remark}[Bias-variance trade-off]
\label{rem:bias-variance}
The proof of Theorem~\ref{thm:upper-bound} shows that, for any 
$h\in(0,1]$,
\[
\mathbb{E}[f(x_T)-f(x^\star)]
\leq\underbrace{\frac{C_6}{T}}_{\text{initialization}}
+\underbrace{\frac{C_7\,d\,h^{-2(k+1)}}{T}}_{\text{variance}}
+\underbrace{C_8\,d\,h^{2(\beta-1)}}_{\text{bias}},
\]
where $C_6,C_7,C_8>0$ depend only on the model parameters. This 
bound exhibits a bias-variance trade-off in $h$. The 
variance term scales as $d\,h^{-2(k+1)}/T$ and blows up as 
$h\to 0$ due to the factor $h^{-(k+1)}$ in the 
estimator~\eqref{eq:grad-estimator}. The bias term scales as 
$d\,h^{2(\beta-1)}$ and vanishes as $h\to 0$, its rate being 
governed by the H\"older regularity $\beta$ of $f$. The optimal 
trade-off $h\asymp T^{-1/(2(\beta+k))}$ makes both terms of order 
$d\,T^{-(\beta-1)/(\beta+k)}$.




\end{remark}

\subsection{Lower bound}

We now show that the rate $N^{-(\beta-1)/(\beta+k)}$ obtained in 
Theorem~\ref{thm:upper-bound} cannot be improved by any sequential 
algorithm operating under the zero-order oracle model. The proof relies on a reduction to the problem of distinguishing between
two hypotheses, in which we construct a pair of functions $g_+,g_-$ that are nearly indistinguishable from 
noisy observations of $g$ alone, yet induce well-separated 
minimizers for their $k$-th derivatives 
$f_\pm=\partial_j^k g_\pm$. 

\begin{theorem}[Lower bound]
\label{thm:lower-bound-bumps}
Let $\beta\geq 2$, $k\geq 0$, and assume the
oracle model \eqref{observations}, where $\xi_t$'s are i.i.d. Gaussian with mean 0 and variance $\sigma^2$. There
exists a constant $C=C(\alpha,\sigma,L,k,\beta,\Theta)>0$,
independent of $N$ and $d$, such that for any sequential algorithm 
using $N$ noisy queries and any estimator
$\hat{x}_N\in\Theta$ measurable w.r.t. $(z_i,y_i)_{i=1}^{N}$,  we have,
\[
\sup_{g\in\mathcal{F}'_{\alpha,\beta,k}(L,L_f,G,G_g,\bar{L})}
\mathbb{E}\bigl[f(\hat{x}_N)-f(x^\star)\bigr]
\geq C\,N^{-(\beta-1)/(\beta+k)}.
\]

\end{theorem}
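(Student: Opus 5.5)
We reduce the problem to testing between two hypotheses via a one-dimensional construction along the coordinate direction $j$. Fix a base function $g_0$ in the interior of the class, for instance $g_0(x)=\frac{\alpha'}{(k+2)!}\,\frac{x_j^{k+2}}{1}\cdot(k+2)!/2\cdot$ (suitably normalized so that $\partial_j^k g_0(x)=\frac{\alpha'}{2}x_j^2$), plus $\frac{\alpha'}{2}\sum_{l\neq j}\frac{k!}{(k+2)!}\ldots$. In fact it is simpler to take $g_0(x)=\frac{\alpha' k!}{(k+2)!}\,x_j^{k+2}+\frac{\alpha'}{2}\frac{x_j^k}{k!}\sum_{l\ne j}(x_l-c_l)^2$, shifted so that the minimizer of $f_0=\partial_j^k g_0=\frac{\alpha'}{2}\|x-c\|^2$ is an interior point $c$ of $\Theta$. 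We choose $\alpha'$ slightly larger than $\alpha$ and the constants $L,L_f,G,G_g,\bar L$ large enough (they are free parameters of the class, presumably to be chosen compatibly) so that $g_0$ lies strictly inside $\mathcal{F}'$ with slack.

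The perturbations are $g_\pm=g_0\pm r\,\delta^{\beta+k}\,\psi\bigl((x_j-c_j)/\delta\bigr)\chi(x)$, where $\psi$ is a fixed smooth compactly supported one-dimensional bump with $\psi^{(k)}(u)$ behaving like $u$ near $0$ (take $\psi(u)=\int\!\cdots\!\int$ of a bump times $u$, i.e. $\psi^{(k)}=u\,\phi(u)$ with $\phi$ a bump equal to $1$ near $0$). Here $\chi$ is a smooth cutoff in the other coordinates, equal to $1$ on $\Theta^+$, and $r>0$ is small. Then $f_\pm=f_0\pm r\delta^{\beta}\psi^{(k)}((x_j-c_j)/\delta)$ on $\Theta$. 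The Hölder norm of the perturbation of order $\beta+k$ is $O(r)$ uniformly in $\delta\le 1$, so $g_\pm\in\mathcal{F}_{\beta+k}(L)$ for $r$ small. Moreover $\nabla^2$ of the perturbation of $f$ is $O(r\delta^{\beta-2})=O(r)$ because $\beta\ge2$, which preserves $\alpha$-strong convexity and the bounds (c)–(f). Near $c$ the perturbation of $f$ is linear, $\pm r\delta^{\beta-1}(x_j-c_j)$, so the minimizers are $x^\star_\pm=c\mp \frac{r}{\alpha'}\delta^{\beta-1}e_j$, which stay interior. Their separation is $\asymp\delta^{\beta-1}$. By strong convexity, any $\hat x$ satisfies $\max_\pm \bigl(f_\pm(\hat x)-f_\pm^\star\bigr)\ge \frac{\alpha}{2}\cdot\frac14\|x^\star_+-x^\star_-\|^2\asymp \delta^{2(\beta-1)}$.

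For indistinguishability we use Gaussian noise: for any sequential algorithm, the chain rule for KL divergence, with the randomization $\zeta_t$ independent of the noise, gives $\mathrm{KL}(P_+,P_-)\le \sum_{t}\mathbb{E}_+\frac{(g_+(z_t)-g_-(z_t))^2}{2\sigma^2}\le \frac{2N r^2\|\psi\|_\infty^2\|\chi\|_\infty^2\,\delta^{2(\beta+k)}}{\sigma^2}$, uniformly over query points. Choosing $\delta=N^{-1/(2(\beta+k))}$ makes this bounded by a constant, so by Le Cam's two-point bound (e.g. Bretagnolle–Huber) the testing error is bounded below by a constant. Hence $\max_\pm\mathbb{E}_\pm[f_\pm(\hat x_N)-f_\pm^\star]\ge c\,\delta^{2(\beta-1)}=c\,N^{-(\beta-1)/(\beta+k)}$, independent of $d$ since the construction only involves coordinate $j$ and the bound holds uniformly over $z_t$.

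The main obstacle is the careful verification that $g_\pm$ belongs to $\mathcal{F}'$ with constants independent of $N$. This requires checking the Hölder condition of order $\beta+k$ for the scaled bump, which holds by the standard scaling $\delta^{\beta+k}\psi(\cdot/\delta)$, and that global Hölder membership is preserved by the multiplication with the cutoff $\chi$. In particular, the polynomial base $g_0$ is only locally in $\mathcal{F}_{\beta+k}$; since the definition of the class is global on $\mathbb{R}^d$, $g_0$ must also be multiplied by a cutoff equal to $1$ on $\Theta^+$. We also need the perturbation to preserve strong convexity, which is exactly where $\beta\ge2$ is used.
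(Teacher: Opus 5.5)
Your proposal is correct and follows essentially the same two-point argument as the paper: a degree-$(k+2)$ polynomial base whose $k$-th $x_j$-derivative is quadratic, a bump $\pm r\delta^{\beta+k}\psi((x_j-c_j)/\delta)$, the Gaussian KL chain rule with $\delta\asymp N^{-1/(2(\beta+k))}$, minimizer separation $\asymp\delta^{\beta-1}$, and strong convexity to convert to optimization error. The one real variation is a simplification: taking $\psi^{(k)}(u)=u$ exactly near $0$ gives the minimizers $x^\star_\pm=c\mp(r/\alpha')\delta^{\beta-1}e_j$ in closed form (valid for small fixed $r$, since $\beta\ge 2$ and $\delta\le 1$), which replaces the paper's implicit-function-theorem step and its two-regime choice of $(h_N,r_N)$.

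Three minor fixes are needed. First, no cutoffs are required: a polynomial of degree $k+2$ has constant $(k+2)$-th derivatives, so for $\beta\ge 2$ it lies globally in $\mathcal{F}_{\beta+k}(C\alpha')$, as the paper checks. The bump depends on $x_j$ alone and needs no cutoff either. Second, a $k$-fold antiderivative of $u\phi(u)$ need not be compactly supported, or even bounded, when $k\ge 2$, which would break your uniform KL bound; take instead $\psi(u)=u^{k+1}\phi(u)/(k+1)!$ with $\phi\equiv 1$ near $0$. Third, the coefficient of $x_j^{k+2}$ should be $\alpha'/(k+2)!$, not $\alpha' k!/(k+2)!$.
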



\begin{remark}[Matching rates and dimension dependence]
\label{rem:matching}
Theorems~\ref{thm:upper-bound} and~\ref{thm:lower-bound-bumps} 
establish matching rates in $N$. Both scale as 
$N^{-(\beta-1)/(\beta+k)}$ for $\beta\geq 2$, thereby identifying 
the optimal rate in $N$ for the minimax optimization risk under 
noisy zero-order access to $g$. The dependence on the dimension 
$d$, however, is not sharp. 
The mismatch in $d$ is carried entirely by the prefactor 
$d^{(2\beta+k-1)/(\beta+k)}$ of the upper bound, whose sharpness 
in $d$ is left as an open question.
\end{remark}

\begin{remark}[The direct case $k=0$]
\label{rem:k0-consistency}
Our results also cover the direct zero-order optimization problem obtained
for $k=0$. In that case, the dimension exponent $(2\beta+k-1)/(\beta+k)$
reduces to $2-\tfrac{1}{\beta}$ and the rate in $N$ becomes $N^{-(\beta-1)/\beta}$,
which is the minimax optimal rate for direct zero-order
optimization of strongly convex functions~(\cite{polyak1990optimal}). 
The dimension exponent
$2-\tfrac{1}{\beta}$ is the same as in the upper bounds of~\citet{akhavan2021distributed,novitskii2021improved}, which is the best known result for $\beta>2$, except for the cases $\beta=2,3$, where the dimension exponent can be reduced to 1  (\cite{akhavan2020exploiting}, \cite{yu2024stochastic}). 
\end{remark}



\section{Proofs}
\label{sec:proofs}

\subsection{Proofs for the upper bound}
\label{proofs-upper-bounds}

\begin{proof}[Proof of Lemma~\ref{lem:bias-multi}]
Fix $l\in\{1,\dots,d\}$ and $x_t\in\mathbb{R}^d$. Since $\int K_1(v)\,dv=0$ and the noise variables $\varepsilon^\pm_{t,l}$ are
independent of $(U_{t,l},V_{t,l})$, the noise mean does not contribute to the
conditional expectation. Indeed,
\[
\mathbb{E}\bigl[K_k(U_{t,l})K_1(V_{t,l})(\varepsilon^+_{t,l}-\varepsilon^-_{t,l})\bigr]
=\mathbb{E}[K_k(U_{t,l})]\,\mathbb{E}[K_1(V_{t,l})]\,
\mathbb{E}[\varepsilon^+_{t,l}-\varepsilon^-_{t,l}]=0,
\]
since $\mathbb{E}[K_1(V_{t,l})]=0$. Therefore,
\begin{equation}
\label{eq:bias-Delta-form}
\mathbb{E}[\tilde{g}_{t,l}\mid x_t]
=\frac{1}{2h^{k+1}}\,\mathbb{E}\bigl[K_k(U_{t,l})\,K_1(V_{t,l})\,
\Delta_l(x_t,U_{t,l},V_{t,l})\bigr],
\end{equation}
where 
$$
\Delta_l(x,u,v):=g(x+hue_j+hve_l)-g(x+hue_j-hve_l).
$$

We now expand $g$ around $x_t$. Recall from Section~\ref{sec:prel} 
that $\ell_*=\lfloor\beta+k\rfloor=k+\lfloor\beta\rfloor$. Since 
$g\in\mathcal{F}_{\beta+k}(L)$, the H\"older 
property~\eqref{eq:holder} yields, for $(u,v)\in[-1/2,1/2]^2$,
\begin{equation}
\label{eq:bias-taylor}
g(x_t+hue_j\pm hve_l)
=\sum_{a+b\leq \ell_*}
\frac{h^{a+b}\,u^a\,(\pm v)^b}{a!\,b!}\,
\partial_j^a\partial_l^b g(x_t)+R_\pm(u,v),
\end{equation}
where the remainder satisfies
$$
|R_\pm(u,v)|\leq L\,\|hue_j\pm hve_l\|^{\beta+k}
\leq L\,h^{\beta+k}\,(|u|+|v|)^{\beta+k}.
$$
Indeed, if $l=j$ we use $|u\pm v|\leq |u|+|v|$, while for $l\neq j$ 
we use $\sqrt{u^2+v^2}\leq |u|+|v|$. Subtracting the two expansions 
in~\eqref{eq:bias-taylor} and noting that 
$(+v)^b-(-v)^b=2v^b$ for $b$ odd and $0$ for $b$ even, we get
\begin{equation}
\label{eq:bias-Delta-expansion}
\Delta_l(x_t,u,v)
=2\!\!\!\sum_{\substack{a+b\leq \ell_*\\b\;\mathrm{odd}}}\!\!
\frac{h^{a+b}\,u^a\,v^b}{a!\,b!}\,
\partial_j^a\partial_l^b g(x_t)
+R_+(u,v)-R_-(u,v).
\end{equation}

Substituting~\eqref{eq:bias-Delta-expansion} 
into~\eqref{eq:bias-Delta-form}, multiplying by 
$K_k(u)K_1(v)/(2h^{k+1})$, and using the independence of $U_{t,l}$ 
and $V_{t,l}$, we obtain
\begin{equation}
\label{eq:bias-after-int}
\mathbb{E}[\tilde{g}_{t,l}\mid x_t]
=\!\!\!\sum_{\substack{a+b\leq \ell_*\\b\;\mathrm{odd}}}\!\!
\frac{h^{a+b-k-1}}{a!\,b!}\,
\partial_j^a\partial_l^b g(x_t)\,\mu_k(a)\,\nu_1(b)
+\mathrm{Rem}_{t,l},
\end{equation}
where $\mu_k(a):=\int u^a K_k(u)\,du$, $\nu_1(b):=\int v^b 
K_1(v)\,dv$, and 
$$
\mathrm{Rem}_{t,l}
:=\mathbb{E}\!\left[\frac{K_k(U_{t,l})\,K_1(V_{t,l})}{2h^{k+1}}
\bigl(R_+(U_{t,l},V_{t,l})-R_-(U_{t,l},V_{t,l})\bigr)\right].
$$
We now apply the moment 
conditions~\eqref{eq:Kk-moments} and~\eqref{eq:K1-moments}. The 
condition~\eqref{eq:K1-moments} forces $\nu_1(b)=0$ for all odd 
$b\geq 3$, so only $b=1$ contributes 
in~\eqref{eq:bias-after-int}. Among the surviving terms, 
$\mu_k(a)=0$ for $a\neq k$ by~\eqref{eq:Kk-moments}. Thus the 
unique remaining pair is $(a,b)=(k,1)$, which lies in the 
admissible range $a+b=k+1\leq \ell_*$ since $\beta>1$ implies 
$\lfloor\beta\rfloor\geq 1$. Its contribution is
$$
\frac{h^{k+1-k-1}}{k!\cdot 1!}\,
\partial_j^k\partial_l g(x_t)\,
\mu_k(k)\,\nu_1(1)
=\partial_j^k\partial_l g(x_t),
$$
where we used $\mu_k(k)=k!$ and $\nu_1(1)=1$. By Schwarz's 
theorem~\eqref{eq:schwarz}, 
$\partial_j^k\partial_l g(x_t)=\partial_l f(x_t)$. We have shown
\begin{equation}
\label{eq:bias-decomp}
\mathbb{E}[\tilde{g}_{t,l}\mid x_t]=\partial_l f(x_t)
+\mathrm{Rem}_{t,l}.
\end{equation}

It remains to bound $\mathrm{Rem}_{t,l}$. Using the bound on 
$R_\pm$ and the identity $h^{\beta+k}/h^{k+1}=h^{\beta-1}$,
$$
|\mathrm{Rem}_{t,l}|
\leq L\,h^{\beta-1}\,
\mathbb{E}\bigl[|K_k(U_{t,l})|\,|K_1(V_{t,l})|\,
(|U_{t,l}|+|V_{t,l}|)^{\beta+k}\bigr].
$$
Since $U_{t,l},V_{t,l}\in[-1/2,1/2]$, we have 
$|U_{t,l}|+|V_{t,l}|\leq 1$, and the last factor in the expectation 
is bounded by $1$. Applying the Cauchy--Schwarz inequality together 
with the independence of $U_{t,l}$ and $V_{t,l}$,
\begin{equation}
\label{eq:bias-Rem-bound}
|\mathrm{Rem}_{t,l}|
\leq L\,h^{\beta-1}
\bigl(\mathbb{E}[K_k(U_{t,l})^2]\bigr)^{1/2}
\bigl(\mathbb{E}[K_1(V_{t,l})^2]\bigr)^{1/2}
=C_{\mathrm{bias}}\,h^{\beta-1}.
\end{equation}
Combining~\eqref{eq:bias-decomp} and~\eqref{eq:bias-Rem-bound} 
yields the coordinatewise bound. Squaring and summing over 
$l=1,\dots,d$ completes the proof.
\end{proof}

\begin{proof}[Proof of Lemma~\ref{lem:var-multi-second-moment}]
\label{proof:lemma2}
We have
$$
\mathbb{E}\bigl[\|\tilde{g}_t\|^2\mid x_t\bigr]
\leq 2\bigl\|\mathbb{E}[\tilde{g}_t\mid x_t]\bigr\|^2
+2\,\mathbb{E}\bigl[\|\tilde{g}_t-\mathbb{E}[\tilde{g}_t\mid x_t]\|^2
\mid x_t\bigr].
$$
We bound 
the two summands on the right hand side of this inequality separately.

\medskip
\noindent\textit{Bound on the squared conditional mean.}
By the same elementary inequality applied with 
$a=\mathbb{E}[\tilde{g}_t\mid x_t]$ and $b=\nabla f(x_t)$, together 
with Lemma~\ref{lem:bias-multi} and the assumption 
$\|\nabla f(x_t)\|\leq G$ on $\Theta$,
$$
\bigl\|\mathbb{E}[\tilde{g}_t\mid x_t]\bigr\|^2
\leq 2\|\nabla f(x_t)\|^2
+2\bigl\|\mathbb{E}[\tilde{g}_t\mid x_t]-\nabla f(x_t)\bigr\|^2
\leq 2G^2+2d\,C_{\mathrm{bias}}^2\,h^{2(\beta-1)}.
$$
Hence the first summand is bounded by 
$4G^2+4d\,C_{\mathrm{bias}}^2\,h^{2(\beta-1)}$.

\medskip
\noindent\textit{Bound on the conditional variance.}
The random variables 
$\{(U_{t,l},V_{t,l},\varepsilon^\pm_{t,l})\}_{l=1}^d$ are mutually 
independent, and so are the coordinates 
$\tilde{g}_{t,l}$ conditionally on $x_t$. Consequently,
$$
\mathbb{E}\bigl[\|\tilde{g}_t-\mathbb{E}[\tilde{g}_t\mid x_t]\|^2
\mid x_t\bigr]
=\sum_{l=1}^d\mathrm{Var}(\tilde{g}_{t,l}\mid x_t)
\leq\sum_{l=1}^d\mathbb{E}[\tilde{g}_{t,l}^2\mid x_t],
$$
and it suffices to bound $\mathbb{E}[\tilde{g}_{t,l}^2\mid x_t]$ for 
each $l$.

We condition on $(U_{t,l},V_{t,l})$. Setting $x:=x_t+hU_{t,l}e_j$ 
and $s:=hV_{t,l}e_l$ (so that $\|s\|\leq h/2$),
\begin{equation}
\label{eq:var-tower}
\mathbb{E}[\tilde{g}_{t,l}^2\mid x_t]
=\frac{1}{4h^{2(k+1)}}\,\mathbb{E}\Bigl[
K_k(U_{t,l})^2\,K_1(V_{t,l})^2\,
\mathbb{E}\bigl[(y^+_{t,l}-y^-_{t,l})^2
\,\big|\,U_{t,l},V_{t,l},x_t\bigr]\Bigr].
\end{equation}
Since $\varepsilon^\pm_{t,l}$ are independent of $(U_{t,l},V_{t,l})$ and satisfy
$\mathbb{E}[(\varepsilon^\pm_{t,l})^2]\leq\sigma^2$, the inequality
$(a+b)^2\leq 2a^2+2b^2$ applied to
$y^+_{t,l}-y^-_{t,l}=(g(x+s)-g(x-s))+(\varepsilon^+_{t,l}-\varepsilon^-_{t,l})$
gives
\begin{equation}
\label{eq:var-noise-split}
\mathbb{E}\bigl[(y^+_{t,l}-y^-_{t,l})^2\,\big|\,U_{t,l},V_{t,l},x_t\bigr]
\leq 2\bigl(g(x+s)-g(x-s)\bigr)^2
+2\,\mathbb{E}[(\varepsilon^+_{t,l}-\varepsilon^-_{t,l})^2]
\leq 2\bigl(g(x+s)-g(x-s)\bigr)^2+8\sigma^2,
\end{equation}
where we used $\mathbb{E}[(\varepsilon^+_{t,l}-\varepsilon^-_{t,l})^2]
\leq 2\mathbb{E}[(\varepsilon^+_{t,l})^2]+2\mathbb{E}[(\varepsilon^-_{t,l})^2]\leq 4\sigma^2$.

We now bound $|g(x+s)-g(x-s)|$ using the smoothness assumptions on 
$g$. Decompose 
\begin{equation*}
g(x+s)-g(x-s)=P+Q+R,
\end{equation*}
where 
\begin{align*}
P&:= g(x+s)-g(x)-\langle\nabla g(x),s\rangle, \\
Q&:= -\bigl(g(x-s)-g(x)+\langle\nabla g(x),s\rangle\bigr), \\
R&:= 2\langle\nabla g(x),s\rangle.
\end{align*}
Since $\nabla g$ is $\bar{L}$-Lipschitz on $\Theta^+$ and 
$x,x\pm s\in\Theta^+$ (as $\|s\|\leq h/2\leq 1/2$, so $x\pm s$ lies 
within distance $1$ of $\Theta$),
$$
|P|=\left|\int_0^1\langle\nabla g(x+\tau s)-\nabla g(x),s\rangle
\,d\tau\right|
\leq\int_0^1 \bar{L}\tau\|s\|^2\,d\tau
=\frac{\bar{L}\|s\|^2}{2},
$$
and similarly $|Q|\leq\bar{L}\|s\|^2/2$. By Cauchy--Schwarz and the 
bound $\|\nabla g(x)\|\leq G_g$ on $\Theta^+$,
$$
|R|=2|\langle\nabla g(x),s\rangle|\leq 2G_g\|s\|.
$$
The inequality $(a+b+c)^2\leq 3(a^2+b^2+c^2)$ then gives
$$
\bigl(g(x+s)-g(x-s)\bigr)^2
\leq 3\Bigl(2\cdot\tfrac{\bar{L}^2\|s\|^4}{4}+4G_g^2\|s\|^2\Bigr)
=\frac{3\bar{L}^2\|s\|^4}{2}+12G_g^2\|s\|^2.
$$
Using $\|s\|\leq h/2$,
\begin{equation}
\label{eq:var-g-diff}
\bigl(g(x+s)-g(x-s)\bigr)^2
\leq\frac{3\bar{L}^2 h^4}{32}+3G_g^2 h^2.
\end{equation}

Substituting~\eqref{eq:var-noise-split} 
and~\eqref{eq:var-g-diff} into~\eqref{eq:var-tower}, and using the 
independence of $U_{t,l}$ and $V_{t,l}$,
\begin{equation*}
\mathbb{E}[\tilde{g}_{t,l}^2\mid x_t]
\leq\frac{\mathbb{E}[K_k(U)^2]\,\mathbb{E}[K_1(V)^2]}{4h^{2(k+1)}}
\Bigl(\frac{3\bar{L}^2 h^4}{16}+6G_g^2 h^2+8\sigma^2\Bigr).
\end{equation*}
Since $h \le 1$, we have $h^4 \le 1$ and $h^2 \le 1$. Hence, the
parenthesis is bounded by a constant independent of $h$, namely
\[
\frac{3\bar{L}^2}{16}+6G_g^2+8\sigma^2 .
\]
Therefore,
\begin{equation}
\label{eq:var-coord-bound}
\mathbb{E}[\tilde{g}_{t,l}^2\mid x_t]
\leq C_{\mathrm{var}}\,h^{-2(k+1)}.
\end{equation}

Summing~\eqref{eq:var-coord-bound} over $l=1,\dots,d$ and combining 
with the bound on the squared conditional mean completes the proof.
\end{proof}

\medskip
The next lemma provides a quantitative recursive bound that we will 
apply in the proof of Theorem~\ref{thm:upper-bound}. It is a 
non-asymptotic version of a classical result going back 
to~\citet{chung1954stochastic}. Closely related non-asymptotic 
formulations can be found in~\citet{moulines2011non} 
and~\citet{jiang2024generalized}. The statement below treats 
simultaneously two forcing terms of different polynomial orders, 
which arise naturally in the analysis of gradient-free methods 
driven by biased gradient estimators.

\begin{lemma}[Recursive inequality]
\label{lem:recursion}
Let $(u_t)_{t\geq 0}$ be a sequence of non-negative real numbers 
satisfying
\begin{equation}
\label{eq:recursion-hypo}
u_{t+1}\leq\Bigl(1-\frac{q}{t+t_0}\Bigr)u_t
+\frac{A}{t+t_0}+\frac{B}{(t+t_0)^2},\qquad t\geq 0,
\end{equation}
for some constants $q\geq 2$, $t_0\geq 2q$ and $A,B\geq 0$. Then, 
for all $T\geq 1$,
\begin{equation}
\label{eq:recursion-conclusion}
u_T\leq\frac{t_0\,u_0}{T+t_0}+\frac{2A}{q}
+\frac{2B}{(q-1)(T+t_0)}.
\end{equation}
\end{lemma}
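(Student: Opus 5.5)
The plan is to unroll the recursion and bound the resulting products and sums explicitly, rather than to guess an inductive bound. Write $s_t:=t+t_0$. Since $t_0\geq 2q$, every factor satisfies $1-q/s_t\geq 1/2>0$. So the recursion can be iterated without sign issues, and non-negativity of $u_t$ lets us drop nothing. Unrolling from $0$ to $T$ gives
\[
u_T\leq \Pi_{0,T}\,u_0+\sum_{t=0}^{T-1}\Pi_{t+1,T}\Bigl(\frac{A}{s_t}+\frac{B}{s_t^2}\Bigr),
\qquad \Pi_{m,T}:=\prod_{i=m}^{T-1}\Bigl(1-\frac{q}{s_i}\Bigr),
\]
with the convention $\Pi_{T,T}=1$.

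The first key step is a product bound. Using $1-x\leq e^{-x}$ and the integral comparison $\sum_{i=m}^{T-1}1/(i+t_0)\geq\int_{m+t_0}^{T+t_0}dx/x$ for the decreasing function $1/x$, we get
\[
\Pi_{m,T}\leq\Bigl(\frac{m+t_0}{T+t_0}\Bigr)^q .
\]
For $m=0$ this gives $\Pi_{0,T}u_0\leq (t_0/(T+t_0))^q u_0\leq t_0u_0/(T+t_0)$, because $q\geq 1$ and the ratio is at most $1$. This is the first term of~\eqref{eq:recursion-conclusion}.

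The second step handles the forcing terms. With $m=t+1$, the weight is $(s_t+1)^q/(T+t_0)^q$. Since $s_t\geq t_0\geq 2q$, we have $(1+1/s_t)^q\leq e^{q/s_t}\leq e^{1/2}\leq 2$, so $(s_t+1)^q\leq 2s_t^q$. The $A$-part is then at most
\[
\frac{2A}{(T+t_0)^q}\sum_{s=t_0}^{T+t_0-1}s^{q-1}.
\]
Because $s^{q-1}$ is nondecreasing ($q\geq 1$), this sum is at most $\int_{t_0}^{T+t_0}x^{q-1}dx\leq (T+t_0)^q/q$, which yields $2A/q$. Likewise the $B$-part is at most
\[
\frac{2B}{(T+t_0)^q}\sum_{s=t_0}^{T+t_0-1}s^{q-2}.
\]
Since $q\geq 2$, the function $s^{q-2}$ is nondecreasing, so the sum is at most $\int_{t_0}^{T+t_0}x^{q-2}dx\leq (T+t_0)^{q-1}/(q-1)$. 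This gives $2B/((q-1)(T+t_0))$.

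The only delicate points are the two places where the hypotheses are used. First, $t_0\geq 2q$ is what makes the shift factor $(1+1/s_t)^q$ bounded by an absolute constant (here $2$), and it also keeps all factors of the product positive. Second, $q\geq 2$ guarantees monotonicity of $s^{q-2}$, so the sum-to-integral comparison goes the right way. I expect the shift-factor estimate to be the main thing to check carefully. If the constant $2$ turned out too tight, I would instead compare $\sum_s (s+1)^q/s$ directly with $\int x^{q-1}$ shifted by one. Non-integer $q$ causes no difficulty, since only real-power monotonicity and $1-x\leq e^{-x}$ are used.
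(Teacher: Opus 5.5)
Your proposal is correct and follows the paper's proof essentially step for step. You unroll with the product weights and bound them by $((m+t_0)/(T+t_0))^q$ via $1-x\le e^{-x}$ and an integral comparison, use the shift inequality $(s_t+1)^q\le 2s_t^q$ from $t_0\ge 2q$, and apply sum-to-integral bounds for $s^{q-1}$ and $s^{q-2}$ via monotonicity, with $q\ge 2$ entering exactly where you say. The only cosmetic difference is that you write the shift factor as $(1+1/s_t)^q\le e^{q/s_t}\le e^{1/2}$, while the paper writes $(1+1/(2q))^q\le e^{1/2}$; this is the same estimate.
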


\begin{proof}
Set $w_{s,t}:=\prod_{r=s}^{t-1}\bigl(1-q/(r+t_0)\bigr)$ for 
$0\leq s\leq t$, with the convention $w_{t,t}:=1$. Since 
$t_0\geq 2q$, all factors satisfy $1-q/(r+t_0)\geq 1/2$, hence are 
non-negative. Iterating~\eqref{eq:recursion-hypo} yields
\begin{equation}
\label{eq:rec-unroll}
u_T\leq w_{0,T}\,u_0
+\sum_{t=0}^{T-1}\frac{w_{t+1,T}\,A}{t+t_0}
+\sum_{t=0}^{T-1}\frac{w_{t+1,T}\,B}{(t+t_0)^2}.
\end{equation}
We bound each term separately.

\medskip
\noindent\textit{Bound on the weights and on the initialization term.}
Using $1-x\leq e^{-x}$ for $x\in[0,1]$ together with the integral 
inequality $\sum_{r=s}^{t-1}1/(r+t_0)\geq\log\bigl((t+t_0)/(s+t_0)\bigr)$,
\begin{equation}
\label{eq:rec-weight}
w_{s,t}\leq\Bigl(\frac{s+t_0}{t+t_0}\Bigr)^{q},\qquad 0\leq s\leq t.
\end{equation}
Applied with $s=0$ and combined with $q\geq 1$ and 
$t_0/(T+t_0)\leq 1$, this yields
\begin{equation}
\label{eq:rec-init}
w_{0,T}\,u_0
\leq\Bigl(\frac{t_0}{T+t_0}\Bigr)^q u_0
\leq\frac{t_0\,u_0}{T+t_0}.
\end{equation}
For the two summation terms, we will repeatedly use the shift 
inequality $(t+1+t_0)^q\leq 2\,(t+t_0)^q$, valid for all $t\geq 0$. 
This follows from $(t+1+t_0)/(t+t_0)\leq 1+1/t_0\leq 1+1/(2q)$ 
(by $t_0\geq 2q$) together with $(1+1/(2q))^q\leq e^{1/2}\leq 2$ 
(applying $(1+x/q)^q\leq e^x$ with $x=1/2$).

\medskip
\noindent\textit{Bound on the $A$-term.}
Applying~\eqref{eq:rec-weight} with $s=t+1$ and the shift 
inequality,
\[
\sum_{t=0}^{T-1}\frac{w_{t+1,T}\,A}{t+t_0}
\leq\frac{A}{(T+t_0)^q}\sum_{t=0}^{T-1}\frac{(t+1+t_0)^q}{t+t_0}
\leq\frac{2A}{(T+t_0)^q}\sum_{t=0}^{T-1}(t+t_0)^{q-1}.
\]
Since $q\geq 2$, the function $r\mapsto(r+t_0)^{q-1}$ is 
non-decreasing, so by the integral test
\[
\sum_{t=0}^{T-1}(t+t_0)^{q-1}
\leq\int_0^T(r+t_0)^{q-1}\,dr
=\frac{(T+t_0)^q-t_0^q}{q}
\leq\frac{(T+t_0)^q}{q}.
\]
Therefore
\begin{equation}
\label{eq:rec-A-bound}
\sum_{t=0}^{T-1}\frac{w_{t+1,T}\,A}{t+t_0}\leq\frac{2A}{q}.
\end{equation}

\medskip
\noindent\textit{Bound on the $B$-term.}
Similarly,
\[
\sum_{t=0}^{T-1}\frac{w_{t+1,T}\,B}{(t+t_0)^2}
\leq\frac{B}{(T+t_0)^q}\sum_{t=0}^{T-1}\frac{(t+1+t_0)^q}{(t+t_0)^2}
\leq\frac{2B}{(T+t_0)^q}\sum_{t=0}^{T-1}(t+t_0)^{q-2}.
\]
Since $q\geq 2$, the function $r\mapsto(r+t_0)^{q-2}$ is 
non-decreasing, and by the integral test
\[
\sum_{t=0}^{T-1}(t+t_0)^{q-2}
\leq\int_0^T(r+t_0)^{q-2}\,dr
=\frac{(T+t_0)^{q-1}-t_0^{q-1}}{q-1}
\leq\frac{(T+t_0)^{q-1}}{q-1}.
\]
Therefore
\begin{equation}
\label{eq:rec-B-bound}
\sum_{t=0}^{T-1}\frac{w_{t+1,T}\,B}{(t+t_0)^2}
\leq\frac{2B}{(q-1)(T+t_0)}.
\end{equation}

Combining~\eqref{eq:rec-init}, \eqref{eq:rec-A-bound} 
and~\eqref{eq:rec-B-bound} into~\eqref{eq:rec-unroll} 
yields~\eqref{eq:recursion-conclusion}.
\end{proof}

\begin{remark}
\label{rem:recursion}
The two forcing terms in~\eqref{eq:recursion-hypo} contribute 
qualitatively differently to the bound~\eqref{eq:recursion-conclusion}. 
The $A$-term gives a residual contribution $2A/q$ that does not 
decay with $T$, whereas the $B$-term decays as $1/(T+t_0)$.  
In the proof of
Theorem~\ref{thm:upper-bound} below, the $A$-term encodes the squared 
bias of the gradient estimator. 
\end{remark}

\begin{proof}[Proof of Theorem~\ref{thm:upper-bound}]
\label{proof:thm1}
Throughout the proof, fix 
$g\in\mathcal{F}'_{\alpha,\beta,k}(L,L_f,G,G_g,\bar{L})$ and 
let $x^\star\in\mathrm{int}(\Theta)$ denote the minimizer of 
$f=\partial_j^k g$ on $\Theta$, so that $\nabla f(x^\star)=0$. The 
recursion~\eqref{eq:sgd} guarantees $x_t\in\Theta$ for all 
$t\geq 0$, hence properties (b), (c), (d) of the class apply at 
every iteration. Set
\[
\Delta_t:=\mathbb{E}\bigl[\|x_t-x^\star\|^2\bigr],
\qquad 
b_t:=\mathbb{E}[\tilde{g}_t\mid x_t]-\nabla f(x_t),
\]
and note that $\Delta_0\leq R^2$.

\medskip
\noindent\textit{One-step recursive inequality on $\Delta_t$.}
Since $\Pi_\Theta$ is non-expansive and $x^\star\in\Theta$,
\[
\|x_{t+1}-x^\star\|^2
\leq\|x_t-\eta_t\tilde{g}_t-x^\star\|^2
=\|x_t-x^\star\|^2-2\eta_t\langle\tilde{g}_t,x_t-x^\star\rangle
+\eta_t^2\|\tilde{g}_t\|^2.
\]
Taking the conditional expectation given $x_t$, using 
$\mathbb{E}[\tilde{g}_t\mid x_t]=\nabla f(x_t)+b_t$, then taking 
total expectation,
\begin{equation}
\label{eq:upper-rec1}
\Delta_{t+1}\leq\Delta_t
-2\eta_t\,\mathbb{E}\bigl[\langle\nabla f(x_t),x_t-x^\star\rangle\bigr]
-2\eta_t\,\mathbb{E}\bigl[\langle b_t,x_t-x^\star\rangle\bigr]
+\eta_t^2\,\mathbb{E}\bigl[\|\tilde{g}_t\|^2\bigr].
\end{equation}
By $\alpha$-strong convexity of $f$ on $\Theta$ and 
$\nabla f(x^\star)=0$,
\begin{equation}
\label{eq:upper-sc}
\langle\nabla f(x_t),x_t-x^\star\rangle
\geq f(x_t)-f(x^\star)+\frac{\alpha}{2}\|x_t-x^\star\|^2
\geq\frac{\alpha}{2}\|x_t-x^\star\|^2.
\end{equation}
By the Cauchy--Schwarz inequality and Young's inequality 
$2|\langle u,v\rangle|\leq\varepsilon\|u\|^2+\|v\|^2/\varepsilon$ 
applied with $\varepsilon=\alpha/2$,
\begin{equation}
\label{eq:upper-bias}
2\bigl|\langle b_t,x_t-x^\star\rangle\bigr|
\leq\frac{\alpha}{2}\|x_t-x^\star\|^2+\frac{2}{\alpha}\|b_t\|^2.
\end{equation}
Substituting~\eqref{eq:upper-sc} and~\eqref{eq:upper-bias} 
into~\eqref{eq:upper-rec1},
\begin{equation}
\label{eq:upper-rec2}
\Delta_{t+1}\leq\Bigl(1-\frac{\alpha\eta_t}{2}\Bigr)\Delta_t
+\frac{2\eta_t}{\alpha}\,\mathbb{E}\bigl[\|b_t\|^2\bigr]
+\eta_t^2\,\mathbb{E}\bigl[\|\tilde{g}_t\|^2\bigr].
\end{equation}

\medskip
\noindent\textit{Reduction to a Chung-type recursion.}
By Lemma~\ref{lem:bias-multi}, 
\begin{equation}
\label{eq:upper-bias-bound}
\mathbb{E}\bigl[\|b_t\|^2\bigr]
\leq d\,C_{\mathrm{bias}}^2\,h^{2(\beta-1)},
\end{equation}
and by Lemma~\ref{lem:var-multi-second-moment}, applicable since 
$x_t\in\Theta$ and $h\in(0,1]$,
\begin{equation}
\label{eq:upper-var-bound}
\mathbb{E}\bigl[\|\tilde{g}_t\|^2\bigr]
\leq 4G^2+4d\,C_{\mathrm{bias}}^2\,h^{2(\beta-1)}
+2d\,C_{\mathrm{var}}\,h^{-2(k+1)}.
\end{equation}
Substituting~\eqref{eq:upper-bias-bound} 
and~\eqref{eq:upper-var-bound} into~\eqref{eq:upper-rec2} with 
$\eta_t=\gamma/(t+t_0)$ yields
\begin{equation}
\label{eq:upper-rec3}
\Delta_{t+1}\leq\Bigl(1-\frac{q}{t+t_0}\Bigr)\Delta_t
+\frac{A}{t+t_0}+\frac{B}{(t+t_0)^2},
\end{equation}
where 
\begin{equation}
\label{eq:upper-qAB}
q:=\frac{\alpha\gamma}{2},\quad
A:=\frac{2\gamma\,d\,C_{\mathrm{bias}}^2\,h^{2(\beta-1)}}{\alpha},\quad
B:=\gamma^2\bigl(4G^2+4d\,C_{\mathrm{bias}}^2\,h^{2(\beta-1)}
+2d\,C_{\mathrm{var}}\,h^{-2(k+1)}\bigr).
\end{equation}
The conditions $\gamma\geq 4/\alpha$ and $t_0\geq\alpha\gamma$ 
imposed in the statement of the theorem are equivalent to $q\geq 2$ 
and $t_0\geq 2q$, which are precisely the hypotheses of 
Lemma~\ref{lem:recursion}. Applying this lemma 
to~\eqref{eq:upper-rec3} with $u_t=\Delta_t$ and $u_0\leq R^2$ 
yields, for all $T\geq 1$,
\begin{equation}
\label{eq:upper-DeltaT-from-lemma}
\Delta_T\leq\frac{t_0\,R^2}{T+t_0}
+\frac{2A}{q}
+\frac{2B}{(q-1)(T+t_0)}.
\end{equation}

\medskip
\noindent\textit{Bias-variance bound on $\Delta_T$.}
Using $q=\alpha\gamma/2$ in $2A/q$, and $T+t_0\geq T$ in the third 
term of~\eqref{eq:upper-DeltaT-from-lemma},
\[
\frac{2A}{q}=\frac{8\,d\,C_{\mathrm{bias}}^2\,h^{2(\beta-1)}}{\alpha^2},
\qquad
\frac{2B}{(q-1)(T+t_0)}\leq\frac{2B}{(q-1)T}.
\]
Substituting the expression of $B$ from~\eqref{eq:upper-qAB} and 
grouping terms,
\begin{equation}
\label{eq:upper-DeltaT-grouped}
\Delta_T\leq\frac{C_3}{T}+\frac{C_4\,d\,h^{-2(k+1)}}{T}
+C_5\,d\,h^{2(\beta-1)},
\end{equation}
where 
\begin{equation}
\label{eq:upper-C345}
C_3:=t_0\,R^2+\frac{8\gamma^2 G^2}{q-1},\quad
C_4:=\frac{4\gamma^2\,C_{\mathrm{var}}}{q-1},\quad
C_5:=\frac{8\,C_{\mathrm{bias}}^2}{\alpha^2}
+\frac{8\gamma^2\,C_{\mathrm{bias}}^2}{q-1}.
\end{equation}
The contribution 
$8\gamma^2\,d\,C_{\mathrm{bias}}^2\,h^{2(\beta-1)}/((q-1)T)$ 
arising from the bias-related part of 
$2B/((q-1)T)$ has been absorbed into the term 
$C_5\,d\,h^{2(\beta-1)}$, using $1/T\leq 1$ for $T\geq 1$.

\medskip
\noindent\textit{Conversion to optimization error.}
By the smoothness inequality~\eqref{eq:smoothness} (valid since 
$x_T\in\Theta$ and $\nabla f(x^\star)=0$),
\[
\mathbb{E}\bigl[f(x_T)-f(x^\star)\bigr]
\leq\frac{L_f}{2}\,\Delta_T,
\]
which combined with~\eqref{eq:upper-DeltaT-grouped} gives
\begin{equation}
\label{eq:upper-opt-error}
\mathbb{E}\bigl[f(x_T)-f(x^\star)\bigr]
\leq\frac{C_6}{T}+\frac{C_7\,d\,h^{-2(k+1)}}{T}
+C_8\,d\,h^{2(\beta-1)},
\end{equation}
where $C_i:=L_f\,C_{i-3}/2$ for $i\in\{6,7,8\}$.

\medskip
\noindent\textit{Optimal choice of $h$.}
We now substitute the prescribed value 
$h=\min(\kappa\,T^{-1/(2(\beta+k))},1)$ and consider two regimes.

If $T\geq\kappa^{2(\beta+k)}$, then 
$h=\kappa\,T^{-1/(2(\beta+k))}\leq 1$, so
\[
h^{-2(k+1)}=\kappa^{-2(k+1)}\,T^{(k+1)/(\beta+k)},
\qquad
h^{2(\beta-1)}=\kappa^{2(\beta-1)}\,T^{-(\beta-1)/(\beta+k)}.
\]
Since $1-(k+1)/(\beta+k)=(\beta-1)/(\beta+k)$, the second and 
third terms in~\eqref{eq:upper-opt-error} both scale as 
$T^{-(\beta-1)/(\beta+k)}$. Moreover, $(\beta-1)/(\beta+k)<1$ 
(since $\beta>1$ and $k\geq 0$) yields 
$T^{-1}\leq T^{-(\beta-1)/(\beta+k)}$, so the initialization term 
$C_6/T$ is also dominated by $T^{-(\beta-1)/(\beta+k)}$. Combining 
and using $d\geq 1$,
\begin{equation}
\label{eq:upper-regime1}
\mathbb{E}\bigl[f(x_T)-f(x^\star)\bigr]
\leq C_9\,d\,T^{-(\beta-1)/(\beta+k)},
\end{equation}
where 
$C_9:=C_6+C_7\,\kappa^{-2(k+1)}+C_8\,\kappa^{2(\beta-1)}$.

If $1\leq T<\kappa^{2(\beta+k)}$, then $h=1$, so 
$h^{2(\beta-1)}=h^{-2(k+1)}=1$ 
and~\eqref{eq:upper-opt-error} reduces to 
$\mathbb{E}[f(x_T)-f(x^\star)]\leq(C_6+C_7\,d)/T+C_8\,d$. Since 
$T\leq\kappa^{2(\beta+k)}$, 
$T^{-(\beta-1)/(\beta+k)}\geq\kappa^{-2(\beta-1)}$. Hence, setting
\[
C_9':=\max\bigl(\kappa^{2(\beta-1)}(C_6+C_7+C_8),\;C_9\bigr),
\]
the bound~\eqref{eq:upper-regime1}, with $C_9$ replaced by $C_9'$, 
also holds in this regime. In both cases,
\begin{equation}
\label{eq:upper-final-T}
\mathbb{E}\bigl[f(x_T)-f(x^\star)\bigr]
\leq C_0\,d\,T^{-(\beta-1)/(\beta+k)},
\end{equation}
where $C_0:=C_9'$ depends only on 
$\alpha,\gamma,t_0,\kappa,L_f,G,R,C_{\mathrm{bias}},C_{\mathrm{var}}$.

\medskip
\noindent\textit{Conversion to total oracle budget $N$.}
Recall $N=2dT$, so $T=N/(2d)$. Hence
\[
T^{-(\beta-1)/(\beta+k)}
=2^{(\beta-1)/(\beta+k)}\,d^{(\beta-1)/(\beta+k)}\,
N^{-(\beta-1)/(\beta+k)},
\]
and using $1+(\beta-1)/(\beta+k)=(2\beta+k-1)/(\beta+k)$,
\[
d\,T^{-(\beta-1)/(\beta+k)}
=2^{(\beta-1)/(\beta+k)}\,d^{(2\beta+k-1)/(\beta+k)}\,
N^{-(\beta-1)/(\beta+k)}.
\]
Setting $C:=2^{(\beta-1)/(\beta+k)}\,C_0$ and combining 
with~\eqref{eq:upper-final-T},
\[
\mathbb{E}\bigl[f(x_T)-f(x^\star)\bigr]
\leq C\,d^{(2\beta+k-1)/(\beta+k)}\,N^{-(\beta-1)/(\beta+k)}.
\]
This bound is uniform over 
$g\in\mathcal{F}'_{\alpha,\beta,k}(L,L_f,G,G_g,\bar{L})$, which 
completes the proof.
\end{proof}

\subsection{Proofs for the lower bound}

\begin{proof}[Proof of Theorem~\ref{thm:lower-bound-bumps}]

By translating the coordinates, we can assume that 
$0\in\mathrm{int}(\Theta)$. Let $h_0\in(0,1]$ be such that
\begin{equation}
\label{eq:lb-Theta-cube}
[-2h_0,2h_0]^d\subset\mathrm{int}(\Theta).
\end{equation}
For notational simplicity we write the proof for the coordinate 
direction $j=1$. The general case is obtained by replacing 
$x_1,e_1,\partial_1$ throughout by $x_j,e_j,\partial_j$ and the 
sum over $l=2,\dots,d$ by the sum over $l\neq j$. We denote 
$R_\Theta:=\sup_{x\in\Theta}\|x\|$.

\medskip
\noindent\textit{Construction of the two hypotheses.}
Let $\Phi\in C_c^\infty(\mathbb{R})$ be supported on $[-1,1]$ and 
satisfy
\begin{equation}
\label{eq:lb-Phi}
\Phi^{(k+1)}(0)\neq 0.
\end{equation}
Such a $\Phi$ exists. For instance, take 
$\psi\in C_c^\infty(\mathbb{R})$ supported on $[-1,1]$ with 
$\psi(0)=1/(k+1)!$ and set $\Phi(t):=t^{k+1}\psi(t)$. Then 
Leibniz's formula gives $\Phi^{(k+1)}(0)=1$.

Define the polynomial
\begin{equation}
\label{eq:lb-q}
q(x):=\frac{2\alpha}{(k+2)!}\,x_1^{k+2}
+\frac{\alpha}{k!}\sum_{l=2}^d x_l^2\,x_1^k.
\end{equation}
A direct computation shows
\begin{equation}
\label{eq:lb-q-derivative}
\partial_1^k q(x)=\alpha\,x_1^2+\alpha\sum_{l=2}^d x_l^2
=\alpha\,\|x\|^2.
\end{equation}
For parameters $h\in(0,h_0]$ and $r>0$ to be chosen later, define
\begin{equation}
\label{eq:lb-gpm}
g_\pm(x):=q(x)\pm r\,h^{\beta+k}\,\Phi(x_1/h),
\end{equation}
and set 
$f_\pm(x):=\partial_1^k g_\pm(x)
=\alpha\,\|x\|^2\pm r\,h^\beta\,\Phi^{(k)}(x_1/h)$.

\medskip
\noindent\textit{Verification of property (a): $g_\pm\in\mathcal{F}_{\beta+k}(L)$.}
Let $\ell_*$ denote the largest integer strictly less than 
$\beta+k$. Since $\beta\geq 2$, we distinguish two cases.

\emph{Case $\beta>2$.} Then $\beta+k>k+2$ so $\ell_*\geq k+2$. 
Since $q$ is a polynomial of degree $k+2$, its Taylor expansion of 
order $\ell_*$ is exact, and consequently 
$q\in\mathcal{F}_{\beta+k}(0)$.

\emph{Case $\beta=2$.} Then $\beta+k=k+2$ is an integer, so 
$\ell_*=\beta+k-1=k+1$. The Taylor remainder of order $\ell_*$ of 
$q$ is 
\[
R_q(x,z)=\sum_{|m|=k+2}\frac{D^m q(x)}{m!}(z-x)^m.
\]
The only nonzero derivatives of order $k+2$ of $q$ are 
$\partial_1^{k+2}q=2\alpha$ and 
$\partial_1^k\partial_l^2 q=2\alpha$ for $l\in\{2,\dots,d\}$. 
Hence
\[
R_q(x,z)
=\frac{2\alpha}{(k+2)!}(z_1-x_1)^{k+2}
+\frac{\alpha}{k!}(z_1-x_1)^k\sum_{l=2}^d(z_l-x_l)^2.
\]
Using $|z_1-x_1|\leq\|z-x\|$ and 
$\sum_{l=2}^d(z_l-x_l)^2\leq\|z-x\|^2$,
\[
|R_q(x,z)|
\leq\frac{2\alpha}{(k+2)!}\|z-x\|^{k+2}
+\frac{\alpha}{k!}\|z-x\|^{k+2}
=C_q\,\alpha\,\|z-x\|^{k+2},
\]
where 
\begin{equation}
\label{eq:lb-Cq-def}
C_q:=\frac{2}{(k+2)!}+\frac{1}{k!}.
\end{equation}
Since $k+2=\beta+k$ in this case, this is exactly the H\"older 
property at exponent $\beta+k$, so 
$q\in\mathcal{F}_{\beta+k}(C_q\,\alpha)$.

In both cases, $q\in\mathcal{F}_{\beta+k}(C_q\,\alpha)$, with the 
bound being an overestimate when $\beta>2$.

We now treat the bump term 
$b_h(x):=r\,h^{\beta+k}\,\Phi(x_1/h)$. Since $b_h$ depends only 
on $x_1$, its multivariate H\"older regularity reduces to the 
univariate one for 
$\varphi_h(t):=r\,h^{\beta+k}\,\Phi(t/h)$. Setting 
$s:=\beta+k-\ell_*\in(0,1]$, we have 
$\varphi_h^{(\ell_*)}(t)=r\,h^s\,\Phi^{(\ell_*)}(t/h)$, so
\begin{equation}
\label{eq:lb-phi-lstar}
\bigl|\varphi_h^{(\ell_*)}(a)-\varphi_h^{(\ell_*)}(b)\bigr|
=r\,h^s\,\bigl|\Phi^{(\ell_*)}(a/h)-\Phi^{(\ell_*)}(b/h)\bigr|.
\end{equation}
We control the right-hand side of~\eqref{eq:lb-phi-lstar} 
by considering two cases. If $|a-b|\leq h$, the mean value 
theorem gives 
$|\Phi^{(\ell_*)}(a/h)-\Phi^{(\ell_*)}(b/h)|\leq 
\|\Phi^{(\ell_*+1)}\|_\infty\,|a-b|/h$, hence
\[
\bigl|\varphi_h^{(\ell_*)}(a)-\varphi_h^{(\ell_*)}(b)\bigr|
\leq r\,\|\Phi^{(\ell_*+1)}\|_\infty\,h^{s-1}|a-b|
\leq r\,\|\Phi^{(\ell_*+1)}\|_\infty\,|a-b|^s,
\]
using $h^{s-1}|a-b|\leq |a-b|^s$ when $|a-b|\leq h$ and 
$s\leq 1$. If $|a-b|>h$, then 
$|\Phi^{(\ell_*)}(a/h)-\Phi^{(\ell_*)}(b/h)|
\leq 2\|\Phi^{(\ell_*)}\|_\infty$ and $h^s\leq |a-b|^s$, so
\[
\bigl|\varphi_h^{(\ell_*)}(a)-\varphi_h^{(\ell_*)}(b)\bigr|
\leq 2r\,\|\Phi^{(\ell_*)}\|_\infty\,|a-b|^s.
\]
Combining, there exists $C_\Phi>0$ (depending on $\Phi,\beta,k$ 
only) such that
\begin{equation}
\label{eq:lb-phi-holder-univar}
\bigl|\varphi_h^{(\ell_*)}(a)-\varphi_h^{(\ell_*)}(b)\bigr|
\leq C_\Phi\,r\,|a-b|^s,\qquad\forall\,a,b\in\mathbb{R}.
\end{equation}
The standard equivalence between H\"older continuity of the 
$\ell_*$-th derivative and the Taylor-remainder definition of the 
H\"older class 
implies that, after enlarging $C_\Phi$ by a constant depending 
only on $\beta+k$, we have 
$\varphi_h\in\mathcal{F}_{\beta+k}(C_\Phi\,r)$ uniformly in 
$h\in(0,h_0]$. Since $|x_1-z_1|\leq\|x-z\|$ in the multivariate 
case, this also yields
\begin{equation}
\label{eq:lb-bh-holder}
b_h\in\mathcal{F}_{\beta+k}(C_\Phi\,r).
\end{equation}

Combining the bounds for $q$ and $b_h$, we obtain 
$g_\pm=q\pm b_h\in\mathcal{F}_{\beta+k}(C_q\alpha+C_\Phi r)$. 
Setting 
\begin{equation}
\label{eq:lb-L0}
L_0:=C_q\,\alpha=\Bigl(\frac{2}{(k+2)!}+\frac{1}{k!}\Bigr)\alpha,
\end{equation}
the assumption $L>L_0$ allows us to define 
\begin{equation}
\label{eq:lb-rL}
r_L:=(L-L_0)/C_\Phi>0,
\end{equation}
and the condition $r\leq r_L$ guarantees 
$g_\pm\in\mathcal{F}_{\beta+k}(L)$.

\medskip
\noindent\textit{Verification of properties (b)--(d): strong convexity, Lipschitz gradient, bounded gradient of $f_\pm$ on $\Theta$.}
A direct computation gives
\begin{equation}
\label{eq:lb-Hessfpm}
\nabla^2 f_\pm(x)=2\alpha\,I_d
\pm r\,h^{\beta-2}\,\Phi^{(k+2)}(x_1/h)\,e_1 e_1^\top.
\end{equation}
The operator norm of the perturbation is at most 
$r\,h^{\beta-2}\|\Phi^{(k+2)}\|_\infty$. Since $\beta\geq 2$ and 
$h\leq h_0\leq 1$, we have $h^{\beta-2}\leq 1$. Setting
\begin{equation}
\label{eq:lb-rsc}
r_{\mathrm{sc}}:=\frac{\alpha}{1+\|\Phi^{(k+2)}\|_\infty},
\end{equation}
the condition $r\leq r_{\mathrm{sc}}$ yields a perturbation norm 
at most $\alpha$, hence $\nabla^2 f_\pm(x)\succeq\alpha\,I_d$ on 
$\mathbb{R}^d$. Therefore $f_\pm$ is $\alpha$-strongly convex on 
$\mathbb{R}^d$, in particular on $\Theta$, which proves~(b).

Under the same condition $r\leq r_{\mathrm{sc}}$,
\[
\|\nabla^2 f_\pm(x)\|_{\mathrm{op}}\leq 2\alpha+\alpha=3\alpha,
\]
so $\nabla f_\pm$ is $3\alpha$-Lipschitz on $\mathbb{R}^d$. 
Setting $L_{f,0}:=3\alpha$, property~(c) holds whenever 
$L_f\geq L_{f,0}$.

For property~(d), we use 
$\nabla f_\pm(x)=2\alpha\,x\pm r\,h^{\beta-1}\,
\Phi^{(k+1)}(x_1/h)\,e_1$. With $r\leq r_{\mathrm{sc}}$, 
$h^{\beta-1}\leq 1$, and $\|x\|\leq R_\Theta$ on $\Theta$,
\[
\|\nabla f_\pm(x)\|
\leq 2\alpha\,R_\Theta+r_{\mathrm{sc}}\|\Phi^{(k+1)}\|_\infty=:G_0,
\]
so~(d) holds whenever $G\geq G_0$.

\medskip
\noindent\textit{Verification of properties (e)--(f): bounded gradient and Lipschitz gradient of $g_\pm$ on $\Theta^+$.}
For $x\in\Theta^+$,
\[
\nabla g_\pm(x)=\nabla q(x)
\pm r\,h^{\beta+k-1}\,\Phi'(x_1/h)\,e_1.
\]
Since $\Theta^+$ is compact and $\nabla q$ continuous, 
$M_q:=\sup_{x\in\Theta^+}\|\nabla q(x)\|<\infty$. Since 
$\beta\geq 2$ and $k\geq 0$, $h^{\beta+k-1}\leq 1$, and 
\[
\|\nabla g_\pm(x)\|\leq M_q+r_{\mathrm{sc}}\|\Phi'\|_\infty
=:G_{g,0}.
\]
Property~(e) holds whenever $G_g\geq G_{g,0}$.

For~(f), 
$\nabla^2 g_\pm(x)=\nabla^2 q(x)\pm r\,h^{\beta+k-2}
\Phi''(x_1/h)\,e_1 e_1^\top$. Since $\Theta^+$ is compact and 
$\nabla^2 q$ continuous, $M_q':=\sup_{x\in\Theta^+}
\|\nabla^2 q(x)\|_{\mathrm{op}}<\infty$. Since $\beta\geq 2$ 
and $k\geq 0$,
\[
\|\nabla^2 g_\pm(x)\|_{\mathrm{op}}
\leq M_q'+r_{\mathrm{sc}}\|\Phi''\|_\infty=:\bar{L}_0,
\]
which makes $\nabla g_\pm$ Lipschitz on $\Theta^+$ with constant 
$\bar{L}_0$. Property~(f) holds whenever $\bar{L}\geq\bar{L}_0$.

\medskip
\noindent\textit{Verification of property (g) and separation of the minimizers.}
Since $f_\pm$ is $\alpha$-strongly convex on $\mathbb{R}^d$, each 
$f_\pm$ has a unique global minimizer $x_\pm^\star$, characterized 
by the first-order optimality condition.

The bump term in $f_\pm$ depends only on $x_1$, so 
$\partial_l f_\pm(x)=2\alpha\,x_l$ for $l\geq 2$, which gives 
$x_{\pm,l}^\star=0$ for $l\geq 2$. For the first coordinate, 
$\partial_1 f_\pm(x)=2\alpha\,x_1\pm r\,h^{\beta-1}\,
\Phi^{(k+1)}(x_1/h)$. Setting $u:=x_1/h$ and 
$\lambda:=r\,h^{\beta-2}$, the optimality condition is
\begin{equation}
\label{eq:lb-implicit-eq}
F_\pm(u,\lambda):=2\alpha\,u\pm\lambda\,\Phi^{(k+1)}(u)=0.
\end{equation}
At $(u,\lambda)=(0,0)$ we have $F_\pm(0,0)=0$ and 
$\partial_u F_\pm(0,0)=2\alpha\neq 0$. By the implicit function 
theorem, there exist $\lambda_0,U_0>0$ depending only on 
$\alpha$ and $\Phi$, and $C^\infty$ functions 
$u_\pm^\star:(-\lambda_0,\lambda_0)\to(-U_0,U_0)$ with 
$u_\pm^\star(0)=0$ solving~\eqref{eq:lb-implicit-eq}. Since 
$f_\pm$ is $\alpha$-strongly convex on $\mathbb{R}^d$, the map 
$x_1\mapsto\partial_1 f_\pm(x)$ is strictly increasing and admits 
a unique zero; hence $u_\pm^\star(\lambda)$ is the unique global 
solution of~\eqref{eq:lb-implicit-eq}, and the resulting point 
$x_\pm^\star$ is the global minimizer of $f_\pm$.

Implicit differentiation of~\eqref{eq:lb-implicit-eq} at 
$\lambda=0$ gives
\[
\frac{du_+^\star}{d\lambda}(0)=-\frac{\Phi^{(k+1)}(0)}{2\alpha},
\qquad
\frac{du_-^\star}{d\lambda}(0)=\frac{\Phi^{(k+1)}(0)}{2\alpha},
\]
so the two branches separate linearly at $\lambda=0$. Since 
$u_\pm^\star\in C^2$, after possibly decreasing $\lambda_0$ there 
exists $M_2>0$ such that, for $|\lambda|\leq\lambda_0$,
\[
\Bigl|u_+^\star(\lambda)+\tfrac{\Phi^{(k+1)}(0)}{2\alpha}\,\lambda
\Bigr|\leq M_2\,\lambda^2,
\quad
\Bigl|u_-^\star(\lambda)-\tfrac{\Phi^{(k+1)}(0)}{2\alpha}\,\lambda
\Bigr|\leq M_2\,\lambda^2.
\]
Combining,
\[
|u_+^\star(\lambda)-u_-^\star(\lambda)|
\geq\frac{|\Phi^{(k+1)}(0)|}{\alpha}|\lambda|-2M_2\lambda^2.
\]
Setting 
$\lambda_1:=\min\{\lambda_0,
|\Phi^{(k+1)}(0)|/(4\alpha M_2)\}$, we have, for 
$|\lambda|\leq\lambda_1$,
\begin{equation}
\label{eq:lb-u-sep}
|u_+^\star(\lambda)-u_-^\star(\lambda)|\geq c_0|\lambda|,
\qquad c_0:=\frac{|\Phi^{(k+1)}(0)|}{2\alpha}>0.
\end{equation}
Returning to the original coordinate $x_1=h\,u$,
\begin{equation}
\label{eq:lb-x-sep}
\|x_+^\star-x_-^\star\|
=h\,|u_+^\star(\lambda)-u_-^\star(\lambda)|
\geq c_0\,h\,|\lambda|=c_0\,r\,h^{\beta-1}.
\end{equation}

By further decreasing $\lambda_1$ if needed, we may assume 
$|u_\pm^\star(\lambda)|\leq 2$ on $|\lambda|\leq\lambda_1$. 
Then, for $h\leq h_0$, 
$|x_{\pm,1}^\star|=h|u_\pm^\star(\lambda)|\leq 2h_0$, and 
$x_{\pm,l}^\star=0$ for $l\geq 2$. Hence 
$x_\pm^\star\in[-2h_0,2h_0]^d\subset\mathrm{int}(\Theta)$ 
by~\eqref{eq:lb-Theta-cube}, which verifies property~(g).

\medskip
\noindent\textit{Kullback-Leibler (KL) divergence between the two hypotheses.}
Let $\mathcal{A}$ be an arbitrary sequential algorithm using $N$ queries. Let $P_\pm^N$ denote the joint law of the 
observation-query trajectory $(z_1,y_1,\ldots,z_N,y_N)$ under 
$g_\pm$ and the algorithm $\mathcal{A}$. At step $t$, the query 
$z_t$ is measurable with respect to past data and the internal 
randomization and conditionally on this past, 
$y_t\sim\mathcal{N}(g_\pm(z_t),\sigma^2)$.

By the chain rule for the KL divergence in sequential sampling 
models~\cite[Lemma~15.1]{lattimore2020bandit},
\begin{equation}
\label{eq:lb-chainrule}
\mathrm{KL}(P_+^N\,\|\,P_-^N)
=\sum_{t=1}^N\mathbb{E}_{P_+}\Bigl[
\mathrm{KL}\bigl(\mathcal{N}(g_+(z_t),\sigma^2)\,\big\|\,
\mathcal{N}(g_-(z_t),\sigma^2)\bigr)\Bigr].
\end{equation}
Since 
$\mathrm{KL}(\mathcal{N}(a,\sigma^2)\,\|\,\mathcal{N}(b,\sigma^2))
=(a-b)^2/(2\sigma^2)$ and 
$|g_+(z)-g_-(z)|=2r\,h^{\beta+k}\,|\Phi(z_1/h)|
\leq 2r\,h^{\beta+k}\,\|\Phi\|_\infty$,
\begin{equation}
\label{eq:lb-KL-bound}
\mathrm{KL}(P_+^N\,\|\,P_-^N)
\leq\frac{2N\,r^2\,h^{2(\beta+k)}\,\|\Phi\|_\infty^2}{\sigma^2}.
\end{equation}

\medskip
\noindent\textit{Choice of parameters $h_N$ and $r_N$.}
Set
\begin{equation}
\label{eq:lb-hN-rN}
h_N:=\min\{N^{-1/(2(\beta+k))},\,h_0\},\qquad
r_N:=c_*\,N^{-1/2}\,h_N^{-(\beta+k)},
\end{equation}
where $c_*>0$ will be chosen below. We distinguish two regimes:
\begin{itemize}
\item Regime~A: $N\geq h_0^{-2(\beta+k)}$, in which case 
$h_N=N^{-1/(2(\beta+k))}$ and $r_N=c_*$;
\item Regime~B: $N<h_0^{-2(\beta+k)}$, in which case 
$h_N=h_0$ and $r_N\leq c_*\,h_0^{-(\beta+k)}$ (since $N\geq 1$).
\end{itemize}
We impose four constraints on $c_*$, sufficient to guarantee that 
the construction is valid in both regimes.

\emph{(i) H\"older constraint.} The condition $r_N\leq r_L$ in 
both regimes is implied by
\begin{equation}
\label{eq:lb-c1}
c_*\leq r_L\,h_0^{\beta+k}.
\end{equation}

\emph{(ii) Strong convexity constraint.} Similarly, 
$r_N\leq r_{\mathrm{sc}}$ in both regimes is implied by
\begin{equation}
\label{eq:lb-c2}
c_*\leq r_{\mathrm{sc}}\,h_0^{\beta+k}.
\end{equation}

\emph{(iii) Implicit function constraint.} The condition 
$|\lambda_N|=r_N\,h_N^{\beta-2}\leq\lambda_1$ holds in Regime~A 
since $|\lambda_N|=c_*\,h_N^{\beta-2}\leq c_*$ (as $\beta\geq 2$ 
and $h_N\leq 1$), and in Regime~B since 
$|\lambda_N|\leq c_*\,h_0^{-(\beta+k)+\beta-2}
=c_*\,h_0^{-(k+2)}$. It is therefore sufficient to require
\begin{equation}
\label{eq:lb-c3}
c_*\leq\lambda_1\,h_0^{k+2}.
\end{equation}

\emph{(iv) KL constraint.} Substituting~\eqref{eq:lb-hN-rN} 
into~\eqref{eq:lb-KL-bound}, the $N$- and $h_N$-dependence 
cancel exactly:
\[
\mathrm{KL}(P_+^N\,\|\,P_-^N)
\leq\frac{2N\,r_N^2\,h_N^{2(\beta+k)}\,\|\Phi\|_\infty^2}{\sigma^2}
=\frac{2c_*^2\,\|\Phi\|_\infty^2}{\sigma^2}.
\]
The condition $\mathrm{KL}(P_+^N\|P_-^N)\leq 1/8$ is implied by
\begin{equation}
\label{eq:lb-c4}
c_*\leq\frac{\sigma}{4\,\|\Phi\|_\infty}.
\end{equation}

We finally fix
\begin{equation}
\label{eq:lb-cstar}
c_*:=\min\Bigl\{r_L\,h_0^{\beta+k},\;
r_{\mathrm{sc}}\,h_0^{\beta+k},\;
\lambda_1\,h_0^{k+2},\;\frac{\sigma}{4\,\|\Phi\|_\infty}\Bigr\}>0,
\end{equation}
which simultaneously satisfies the four 
constraints~\eqref{eq:lb-c1}--\eqref{eq:lb-c4}. By Pinsker's 
inequality~\cite[Lemma~2.5]{tsybakov2009nonparametric}, for the total variation distance $\mathrm{TV}(P_+^N,P_-^N)$ we have:
\begin{equation}
\label{eq:lb-TV}
\mathrm{TV}(P_+^N,P_-^N)\leq\sqrt{\tfrac{1}{2}
\mathrm{KL}(P_+^N\,\|\,P_-^N)}\leq\frac{1}{4}.
\end{equation}

\medskip
\noindent \textit{Reduction to two hypotheses.}
Let $\hat{x}_N\in\Theta$ be the estimator output by the 
algorithm $\mathcal{A}$. Set 
$s_N:=\|x_+^\star-x_-^\star\|/2$, which 
by~\eqref{eq:lb-x-sep} satisfies 
$s_N\geq(c_0/2)\,r_N\,h_N^{\beta-1}$. For 
$\omega\in\{+,-\}$, set 
$A_\omega:=\{\|\hat{x}_N-x_\omega^\star\|\geq s_N\}$.

Define the test $\psi$ which equals $+$ if 
$\|\hat{x}_N-x_+^\star\|<s_N$, and $-$ otherwise. If $\psi=+$, 
then by the triangle inequality,
\[
\|\hat{x}_N-x_-^\star\|
\geq\|x_+^\star-x_-^\star\|-\|\hat{x}_N-x_+^\star\|
>2s_N-s_N=s_N,
\]
so $\{\psi=+\}\subset A_-$. By definition, 
$\{\psi=-\}\subset A_+$. Therefore,
\[
\mathbb{P}_+(A_+)+\mathbb{P}_-(A_-)
\geq\mathbb{P}_+(\psi=-)+\mathbb{P}_-(\psi=+).
\]
The testing 
inequality~\cite[Theorem~2.2]{tsybakov2009nonparametric} gives 
$\mathbb{P}_+(\psi=-)+\mathbb{P}_-(\psi=+)
\geq 1-\mathrm{TV}(P_+^N,P_-^N)\geq 3/4$, 
where we used~\eqref{eq:lb-TV}. Hence 
$\max_{\omega}\mathbb{P}_\omega(A_\omega)\geq 3/8$. By Markov's 
inequality,
\begin{equation}
\label{eq:lb-L2}
\max_{\omega\in\{+,-\}}\mathbb{E}_\omega\|\hat{x}_N-x_\omega^\star\|^2
\geq s_N^2\,\max_{\omega}\mathbb{P}_\omega(A_\omega)
\geq\frac{3}{8}s_N^2
=\frac{3}{32}\|x_+^\star-x_-^\star\|^2.
\end{equation}

\medskip
\noindent\textit{Conversion to optimization error and final rate.}
Since $\hat{x}_N\in\Theta$, $x_\omega^\star\in\mathrm{int}(\Theta)$, 
$\nabla f_\omega(x_\omega^\star)=0$, and $f_\omega$ is 
$\alpha$-strongly convex on $\Theta$,
\[
f_\omega(\hat{x}_N)-f_\omega(x_\omega^\star)
\geq\frac{\alpha}{2}\|\hat{x}_N-x_\omega^\star\|^2.
\]
Combining with~\eqref{eq:lb-L2} and~\eqref{eq:lb-x-sep},
\begin{equation}
\label{eq:lb-opt-error}
\max_{\omega}\mathbb{E}_\omega
\bigl[f_\omega(\hat{x}_N)-f_\omega(x_\omega^\star)\bigr]
\geq\frac{3\alpha}{64}\|x_+^\star-x_-^\star\|^2
\geq\frac{3\alpha\,c_0^2}{64}\,r_N^2\,h_N^{2(\beta-1)}.
\end{equation}

It remains to show that $r_N^2\,h_N^{2(\beta-1)}$ is of the order 
$N^{-(\beta-1)/(\beta+k)}$ in both regimes. By the definition 
of $r_N$, 
$r_N^2\,h_N^{2(\beta-1)}=c_*^2\,N^{-1}\,h_N^{-2(k+1)}$.

In Regime~A, $h_N=N^{-1/(2(\beta+k))}$, so 
$h_N^{-2(k+1)}=N^{(k+1)/(\beta+k)}$, and
\[
r_N^2\,h_N^{2(\beta-1)}
=c_*^2\,N^{-1+(k+1)/(\beta+k)}
=c_*^2\,N^{-(\beta-1)/(\beta+k)}.
\]
In Regime~B, $h_N=h_0$, and the inequality $N<h_0^{-2(\beta+k)}$ 
is equivalent to $h_0^{-2(k+1)}\,N^{-(k+1)/(\beta+k)}>1$. Using 
$N^{-1}=N^{-(\beta-1)/(\beta+k)}\cdot N^{-(k+1)/(\beta+k)}$,
\[
r_N^2\,h_N^{2(\beta-1)}
=c_*^2\,h_0^{-2(k+1)}\,N^{-1}
=c_*^2\,N^{-(\beta-1)/(\beta+k)}\,
\bigl(h_0^{-2(k+1)}\,N^{-(k+1)/(\beta+k)}\bigr)
\geq c_*^2\,N^{-(\beta-1)/(\beta+k)}.
\]
Combining the two regimes,
\begin{equation}
\label{eq:lb-rh-final}
r_N^2\,h_N^{2(\beta-1)}\geq c_*^2\,N^{-(\beta-1)/(\beta+k)}.
\end{equation}
Substituting~\eqref{eq:lb-rh-final} into~\eqref{eq:lb-opt-error},
\begin{equation}
\label{eq:lb-final-bound}
\max_{\omega\in\{+,-\}}\mathbb{E}_\omega
\bigl[f_\omega(\hat{x}_N)-f_\omega(x_\omega^\star)\bigr]
\geq C\,N^{-(\beta-1)/(\beta+k)},
\end{equation}
where $C:=3\alpha\,c_0^2\,c_*^2/64>0$ depends on 
$\alpha,\sigma,L,k,\beta,\Theta$ through 
$c_0,c_*,h_0,C_q,C_\Phi$, but not on $N$ or $d$.

The verifications above show that $g_+,g_-\in 
\mathcal{F}'_{\alpha,\beta,k}(L,L_f,G,G_g,\bar{L})$ whenever 
$L>L_0$, $L_f\geq L_{f,0}$, $G\geq G_0$, $G_g\geq G_{g,0}$, 
$\bar{L}\geq\bar{L}_0$. For the arbitrary sequential algorithm 
$\mathcal{A}$ fixed at the beginning of the proof, 
\eqref{eq:lb-final-bound} yields
\[
\sup_{g\in\mathcal{F}'_{\alpha,\beta,k}(L,L_f,G,G_g,\bar{L})}
\mathbb{E}\bigl[f(\hat{x}_N)-f(x^\star)\bigr]
\geq\max_{\omega}\mathbb{E}_\omega
\bigl[f_\omega(\hat{x}_N)-f_\omega(x_\omega^\star)\bigr]
\geq C\,N^{-(\beta-1)/(\beta+k)}.
\]
Taking the infimum over all sequential algorithms completes the 
proof.
\end{proof}

\section{Discussion and conclusion}
We have studied the problem of minimizing the $k$-th order
derivative $f=\partial_j^k g$ of an unknown
function $g$ under noisy zero-order access to $g$, for all
$k\geq 0$. Our main results, Theorems~\ref{thm:upper-bound}
and~\ref{thm:lower-bound-bumps}, identify the optimal rate in $N$
of the minimax optimization risk over the class
$\mathcal{F}'_{\alpha,\beta,k}(L,L_f,G,G_g,\bar{L})$. The upper
bound scales as 
$d^{(2\beta+k-1)/(\beta+k)}\,N^{-(\beta-1)/(\beta+k)}$, while the
matching lower bound is of the order
$C\,N^{-(\beta-1)/(\beta+k)}$ for some constant $C>0$ independent
of $d$. 
The polynomial rate $N^{-(\beta-1)/(\beta+k)}$ reflects the loss
of regularity induced by the indirect access to derivative
information. Each additional order of differentiation effectively
costs one unit of smoothness in the rate exponent. 



Some questions remain to be explored.
The upper bound carries
the dimension factor $d^{(2\beta+k-1)/(\beta+k)}$, whereas the
lower bound is independent of $d$. Whether the exponent
$(2\beta+k-1)/(\beta+k)$ here is sharp is an open question. Standard
Assouad-type constructions are not directly available because
perturbing $g$ along several coordinates simultaneously must
preserve both the H\"older regularity of $g$ and the strong
convexity of $\partial_j^k g$. This interaction makes the usual
reductions delicate.



Several other questions are also of interest, such as relaxing the strong convexity
assumption on $f$ to mere convexity. This changes the geometry of the
problem since the minimizer is no longer unique nor
well-separated and the techniques of both the upper and lower
bound proofs need to be revisited. Studying one-point
oracle models, in which only a single noisy query of $g$
is available per iteration, would introduce additional
bias-variance trade-offs absent from the two-point feedback
setting. It would also be of interest to develop adaptive
procedures that do not require prior knowledge of the smoothness
parameter $\beta$, as well as procedures that exploit
additional structure such as low intrinsic dimensionality to
mitigate the dependence on $d$. Finally, our analysis treats $f$
as a single coordinate $k$-th derivative; minimizing a mixed
partial derivative of order $k$ would require substantially
different gradient estimators since the two-kernel construction
of Section~\ref{sec:prel} is tailored to the case of one coordinate.

On a broader level, our results suggest that gradient-free
optimization of derivatives is well captured by
the tools of nonparametric statistics. The loss of $k$
derivatives induced by indirect observation is reflected by a corresponding shift of the optimal rate exponent.
We hope that this perspective will be useful for related problems at
the interface of nonparametric statistics and gradient-free
optimization.

\vspace{6mm}

{\bf Acknowledgements}

\vspace{2mm}

The work of Sirine Louati and Alexandre B. Tsybakov was supported by Labex Ecodec (ANR-11-LABEX-0047) and by ANR MaLIP (ANR-25-CE40-3228-01).

{  
\bibliographystyle{unsrtnat}
\bibliography{biblio}
  
} 

\end{document}